\documentclass[12pt,twoside]{article}

\setcounter{page}{1}

\setlength{\textheight}{21.6cm}

\setlength{\textwidth}{14cm}

\setlength{\oddsidemargin}{1cm}

\setlength{\evensidemargin}{1cm}

\pagestyle{myheadings}

\thispagestyle{empty}

\markboth{\small{Louis Kao, Yen-Jen Cheng and Chih-wen Weng}}{\small{Sharp bounds of the $A_\alpha$-spectral radius of mixed tree}}

\usepackage[centertags]{amsmath}
\usepackage{amssymb}
\usepackage{amsthm}
\usepackage[misc]{ifsym}
\usepackage{mathrsfs}
\usepackage{kbordermatrix}
\usepackage{graphicx, subfigure, wrapfig}
\usepackage{tikz,multicol}
\usepackage{esvect}

\usepackage{pgfplots}
\pgfplotsset{compat=1.15}
\usepackage{mathrsfs}
\usetikzlibrary{arrows}

\date{August 5, 2021}

\begin{document}

\centerline{\Large{\bf Sharp bounds of the $A_\alpha$-spectral radii of mixed trees}}

\centerline{}

\centerline{\bf {Yen-Jen Cheng}}

\centerline{}

\centerline{Department of Applied Mathematics,}

\centerline{National Yang Ming Chiao Tung University,}

\centerline{1001 Ta Hsueh Road, Hsinchu, Taiwan.}

\centerline{Email addresses: yjc7755@nycu.edu.tw}

\centerline{}

\centerline{\bf {Louis Kao} }

\centerline{(Corresponding author)}

\centerline{}

\centerline{Department of Applied Mathematics,}

\centerline{National Yang Ming Chiao Tung University,}

\centerline{1001 Ta Hsueh Road, Hsinchu, Taiwan.}

\centerline{Email addresses: chihpengkao.am03@g2.nctu.edu.tw}

\centerline{}

\centerline{\bf {Chih-Wen Weng}}

\centerline{}

\centerline{Department of Applied Mathematics,}

\centerline{ National Yang Ming Chiao Tung University,}

\centerline{ 1001 Ta Hsueh Road, Hsinchu, Taiwan. }

\centerline{Email addresses: weng@math.nctu.edu.tw}

\newtheorem{Theorem}{Theorem}[section]

\newtheorem{Corollary}[Theorem]{Corollary}

\newtheorem{Lemma}[Theorem]{Lemma}

\newtheorem{Proposition}[Theorem]{Proposition}

\theoremstyle{definition}
\newtheorem{Example}[Theorem]{Example}
\newtheorem{Definition}[Theorem]{Definition}

\centerline{}

\begin{abstract} A mixed tree is a tree in which both directed arcs and undirected edges may exist. Let $T$ be a mixed tree with $n$ vertices and $m$ arcs, where an undirected edge is counted twice as arcs. Let $A$ be the adjacency matrix of $T$. For $\alpha\in[0,1]$, the matrix $A_\alpha$ of $T$ is defined to be $\alpha D^++(1-\alpha)A$, where $D^+$ is the  the diagonal out-degree matrix of $T$. The $A_\alpha$-spectral radius of $T$ is the largest real eigenvalue of $A_\alpha$. We will give a sharp upper bound and a sharp lower bound of the $A_\alpha$-spectral radius of $T$.
\end{abstract}

{\bf Mathematics Subject Classification:}  05C05, 05C50, 15A42 \\

{\bf Keywords:} mixed tree, $A_\alpha$-spectral radius,  Kelmans transformation

\section{Introduction}\label{s1}

A {\it mixed graph} is a graph in which directed and undirected edges between two distinct vertices may exist at most once.  To distinguish, we refer an {\it arc} to as  a directed edge in a mixed graph. An {\it undirected graph} is a mixed graph without arcs.  For a mixed graph $G$ of order $n$, let the vertex set of $G$ be $V(G)=[n]=\{1,2,\ldots,n\}$, and the set $E(G)$ collects all arcs $\overrightarrow{ij}$ and all undirected  edges $ij$ in $G$, where $i, j\in V(G)$ are distinct. The {\it size} of a mixed graph $G$ is defined to be the number of arcs plus twice the number of undirected edges in $G$. The {\it adjacency matrix} $A=(a_{ij})$ of $G$ is a square $01$-matrix of order $n$ defined by $a_{ij}=1$ if and only if $\vv{ij}$ or $ij$ in $E(G)$. The diagonal matrix $D^+={\rm diag}(d^+_1, d^+_2, \ldots, d^+_n)$  is called the {\it out-degree matrix}  of $T$, where $d^+_i=|\{j~:~\vv{ij}\in E(G){~\rm or~}ij\in E(G)\}|$. For real number $\alpha\in[0,1]$, the matrix $A_\alpha(G)$ of $G$ is defined to be $\alpha D^++(1-\alpha)A$. The concepts of $A_\alpha$ matrix of graphs were first introduced by Nikiforov\cite{n:17} in 2017 and then liu et al.\cite{l:19} start to consider the $A_\alpha$ matrix for digraphs. Since $A_\alpha$ is nonnegative and it is well known that a nonnegative matrix has a real eigenvalue,  let $\rho_\alpha(G)$ denote
the largest real eigenvalue $\rho(A_\alpha(G))$  of the $A_\alpha$  matrix $A_\alpha(G)$ of $G$, and refer  $\rho_\alpha(G)$ to as the {\it $A_\alpha$-spectral radius}, or $\alpha$-{\it index} of $G$. For the previous studies on $A_\alpha$-spectral radii of undirected graphs and mixed graphs, see \cite{gb:20,gz:20,lcm:19,nprs:17,r:20,xsw:20}.

The {\it underlying graph} of a mixed graph $G$ is the undirected graph obtained from $G$ by removing the directions on arcs.
The {\it distance} $\partial(a,b)$ for vertices $a,b$ in $G$ is their distance in the underlying graph of $G$. The {\it diameter} of $G$ is defined to be $\max_{a,b\in V(G)}\partial(a,b)$.
Similarly the  {\it mixed tree}, {\it mixed path}, {\it mixed star} are defined as mixed graphs whose underlying graphs are
tree, path, and star, respectively, where a star is a tree of diameter at most $2$. We denote the mixed path of order $k$ and size $2k-2$ by $P_k$.
 The main goal of this paper is to find the sharp upper bound and sharp lower bound of the $A_\alpha$-spectral radii of mixed trees of order $n$ and size $m$, where $n-1\leq m\leq 2n-2$. The following two theorems are our main results.

\begin{Theorem}\label{upr}
If $\alpha\in [0, 1]$ and $T$ is a mixed tree of order $n$ and size $m$, then $$\rho_\alpha(T)\leq\frac{1}{2}\left(\alpha n+\sqrt{\alpha^2n^2-4\alpha^2(n-1)+4(1-\alpha)^2(m-n+1)}\right).$$ Moreover, every mixed star of order $n$ and size $m$ with maximum out-degree $n-1$ attains the upper bound.
\end{Theorem}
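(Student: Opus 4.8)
The plan is to prove the inequality by a monotonicity argument that reduces an arbitrary mixed tree to a mixed star, and then to read off the extremal value (and the ``moreover'' statement) from an explicit computation on the star; first I dispose of the easy cases and fix notation. Since $T$ is a tree, if it has $k$ undirected edges and $a$ arcs then $k+a=n-1$ and $m=2k+a$, so $m-n+1=k$ and the quantity under the root is $\alpha^2(n-2)^2+4(1-\alpha)^2k$. For a nonnegative matrix the spectral radius is at most the largest row sum, so $\rho_\alpha(T)\le\max_i d^+_i\le n-1$. If $\alpha=1$ then $A_1(T)=D^+$ is diagonal, $\rho_1(T)=\max_i d^+_i\le n-1$, and the displayed bound equals $n-1$; if $k=0$ then the tree with the orientation given by $T$ is acyclic, so a topological reordering makes $A_\alpha(T)$ upper triangular with diagonal $(\alpha d^+_i)$, hence $\rho_\alpha(T)=\alpha\max_i d^+_i\le\alpha(n-1)$, which is the displayed bound with $m=n-1$. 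So from now on assume $0\le\alpha<1$ and $k\ge1$; since $d^+_i\ge1$ at the ends of an undirected edge, the $2\times2$ principal submatrix of $A_\alpha(T)$ on that edge has spectral radius at least $\alpha+(1-\alpha)=1$, whence $\rho_\alpha(T)\ge1>\alpha$.

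Next, the model computation. Let $S$ be any mixed star of order $n$ and size $m$: it has a center $c$, $p$ leaves joined to $c$ by arcs out of $c$, $q$ leaves joined by arcs into $c$, and $k$ leaves joined by undirected edges, with $p+q+k=n-1$. The partition of $V(S)$ into $\{c\}$ and the three leaf-classes is equitable, so $\rho_\alpha(S)$ is the largest root of the corresponding quotient matrix. In the associated nonnegative eigenvector the $p$ ``outgoing'' leaves get value $0$ (their row of $A_\alpha(S)$ is zero), every other leaf $\ell$ gets $v_\ell=\tfrac{1-\alpha}{\,\rho-\alpha\,}v_c$, and the equation at $c$ collapses to $\rho=\alpha(p+k)+\tfrac{(1-\alpha)^2k}{\,\rho-\alpha\,}$, i.e.
\[
\rho^2-\alpha(p+k+1)\rho+\alpha^2(p+k)-(1-\alpha)^2k=0 .
\]
The larger root is strictly increasing in $p+k$ (as $\rho=\tfrac12(\alpha(p+k+1)+\sqrt{\alpha^2(p+k-1)^2+4(1-\alpha)^2k})$ and $p+k\ge k\ge1$), so over mixed stars of order $n$ and size $m$ it is maximized exactly when $p=n-1-k$, $q=0$, i.e.\ by the mixed stars of maximum out-degree $n-1$; for those the equation becomes $\rho^2-\alpha n\rho+\alpha^2(n-1)-(1-\alpha)^2(m-n+1)=0$, whose larger root is precisely the right-hand side of the theorem. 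This proves the ``moreover'' assertion, and reduces everything to $\rho_\alpha(T)\le\max\{\rho_\alpha(S):S\text{ a mixed star of order }n,\text{ size }m\}$.

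For that I would use a Kelmans-type transformation: for an edge $xy$ of the current mixed tree, delete every edge incident to $y$ other than $xy$ and re-attach it, with unchanged direction, to $x$ in place of $y$. Inside a tree this keeps the underlying graph a tree, keeps $n$, $m$, and the number of undirected edges fixed, and makes $y$ a leaf; iterating, with all re-attachments aimed at one fixed vertex, it produces a mixed star $S$ with the same parameters as $T$. The crux is that this operation never decreases $\rho_\alpha$. I expect this to be the main obstacle: in the symmetric theory one simply tests the new matrix against the old Perron eigenvector, but here $A_\alpha$ is not symmetric, so one must work with a right Perron eigenvector \emph{and} a left Perron eigenvector simultaneously, verify that the difference of the two $A_\alpha$-matrices pairs nonnegatively against them, and handle the reducible $A_\alpha$'s (e.g.\ when $y$ has an outgoing arc whose head is a sink) by perturbing to a strictly positive matrix and passing to the limit. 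A self-contained alternative is to estimate $\rho_\alpha(T)$ directly from the eigen-equations $(\rho-\alpha d^+_i)v_i=(1-\alpha)\sum_{j:\,\vv{ij}\in E(T)\text{ or }ij\in E(T)}v_j$ at a vertex $u$ of maximum Perron value and at its neighbours: multiplying the equation at $u$ by $(\rho-\alpha)$ and substituting produces an identity for $(\rho-\alpha)(\rho-\alpha d^+_u)$ whose ``error terms'' vanish on a star; but bounding those terms by $(1-\alpha)$ times the number of undirected edges away from $u$ seems to require a careful bottom-up estimate of $v$ along $T$ together with $d^+_u\le n-1$, so the Kelmans route looks more robust.
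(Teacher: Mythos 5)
Your computation of $\rho_\alpha$ for a mixed star via the equitable partition, the monotonicity of the larger root in the out-degree of the center, and the separate treatment of the arc-only case $m=n-1$ (triangularity, $\rho_\alpha=\alpha\max_i d_i^+$) are all correct and essentially identical to the paper's Proposition~5.4, the final paragraph of its proof of the theorem, and its Corollary~5.3. The gap is exactly where you predicted it: the reduction of a general mixed tree to a mixed star. You have not proved that any transformation is $\rho_\alpha$-monotone, and worse, the specific operation you propose --- fix a target vertex and, along each edge $xy$, re-attach every other edge at $y$ to $x$ with unchanged direction --- is false as a monotone operation. Take $n=3$, $m=3$, with an arc from $2$ to $1$ and an undirected edge between $2$ and $3$; this is already a mixed star with center $2$ of maximum out-degree $2$, and its $\alpha$-index is $\tfrac12\bigl(3\alpha+\sqrt{\alpha^2+4(1-\alpha)^2}\bigr)$ (e.g.\ about $1.31$ at $\alpha=\tfrac12$). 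Aiming your re-attachment at vertex $1$ produces the star with an arc from $2$ to $1$ and an undirected edge between $1$ and $3$, whose $\alpha$-index is $1$. So the operation can strictly decrease $\rho_\alpha$; the direction of the shift and the admissible local configurations matter, and "aim everything at one fixed vertex" cannot be the rule.

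The paper closes this gap with machinery you would need to reproduce: a Kelmans transformation for nonnegative, not necessarily symmetric matrices (its Theorem~2.1, imported from \cite{kw:21}), which requires the symmetry condition $c_{ab}=c_{ba}$ between the two vertices involved and shifts mass from $b$ to $a$ only so that $a$'s row, column and diagonal entries dominate $b$'s afterwards; a verification (Lemma~3.3) that the graph-level transformation induces exactly this matrix-level transformation on $A_\alpha$, including the diagonal shift $k=\alpha|N_G^+(b)-N_G^+(a)|$ coming from the out-degree matrix; a characterization (Lemma~5.1) of which pairs $a,b$ (joined by an undirected edge, or at distance $2$ in one of seven local configurations) keep the result a mixed tree; and a determination of the maximal elements of the resulting poset (Proposition~4.2), which turn out to be precisely the mixed stars together with the arc-only trees you already handled. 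Your two fallback suggestions (left and right Perron vectors plus a perturbation argument, or a direct eigen-equation estimate) are plausible directions but are left entirely unexecuted, and the counterexample above shows that whatever monotone operation you settle on must be substantially more constrained than the one you wrote down. As it stands the proposal proves the theorem only for mixed stars and for $m=n-1$.
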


\begin{Theorem}\label{lwr}
If $T$ is a mixed tree of order $n$ and size $m$, and set $k=\lceil \frac{n}{2n-m-1}\rceil$ then $$\rho_\alpha(T)\geq \rho_\alpha(P_k).$$
Moreover, the lower bound is attained when $T=P_n$.
\end{Theorem}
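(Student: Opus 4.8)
The plan is to bound $\rho_\alpha(T)$ from below by the $A_\alpha$-spectral radius of the largest undirected tree that sits inside $T$ once the arcs are deleted, and then to use that paths minimize $\rho_\alpha$ among undirected trees of a fixed order. Two easy ingredients handle the bookkeeping. First, a \emph{monotonicity lemma}: if $H$ is obtained from a mixed graph $G$ by deleting some arcs and/or undirected edges while keeping all vertices, then $A_\alpha(H)\le A_\alpha(G)$ entrywise (the out-degrees and the off-diagonal $01$-entries can only drop, and $\alpha,1-\alpha\ge 0$); since both matrices are nonnegative this gives $\rho_\alpha(H)\le\rho_\alpha(G)$. In particular, padding $P_{t-1}$ by an isolated vertex realizes it as a spanning subgraph of $P_t$, so $\rho_\alpha(P_1)\le\rho_\alpha(P_2)\le\cdots$ is non-decreasing in the order.

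Now the structure. Writing $u$ for the number of undirected edges and $a$ for the number of arcs of $T$, the underlying tree has $n-1$ edges and size $m$, so $u+a=n-1$ and $2u+a=m$, giving $u=m-n+1$ and $a=2n-2-m$. Deleting all arcs from $T$ leaves an undirected forest $F$ on the same $n$ vertices; deleting a bridge of a forest raises the component count by one, so $F$ has $1+(2n-2-m)=2n-m-1$ components $F_1,\dots,F_{2n-m-1}$, each an undirected tree (possibly a single vertex). Since $A_\alpha(F)$ is block diagonal, the monotonicity lemma gives $\rho_\alpha(T)\ge\rho_\alpha(F)=\max_i\rho_\alpha(F_i)$. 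The orders of the $F_i$ sum to $n$, so by pigeonhole some $F_j$ has order $t_j\ge\lceil n/(2n-m-1)\rceil=k$. Granting the key lemma below---that $\rho_\alpha(S)\ge\rho_\alpha(P_t)$ for every undirected tree $S$ of order $t$---we conclude $\rho_\alpha(T)\ge\rho_\alpha(F_j)\ge\rho_\alpha(P_{t_j})\ge\rho_\alpha(P_k)$, the last step by the monotonicity of $\rho_\alpha$ along paths. For the ``moreover'' claim, $T=P_n$ forces $m=2n-2$, hence $2n-m-1=1$, $k=n$, $F=P_n$ is its own single component, and every inequality in the chain is an equality.

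The real work is the \emph{key lemma}: $P_t$ minimizes $\rho_\alpha$ among undirected trees of order $t$. For $\alpha=0$ this is the classical minimality of the adjacency spectral radius of $P_t$ among connected graphs on $t$ vertices, and for $\alpha=1$ it merely says $P_t$ has the smallest maximum degree; the genuine content is $\alpha\in(0,1)$, where the term $\alpha D^+$ must be tracked through any transformation. I would prove it by reducing a general tree $S\ne P_t$ to $P_t$ via operations that do not increase $\rho_\alpha$: either the Kelmans transformation (checking by a Perron-vector argument that it never decreases $\rho_\alpha$ for $A_\alpha$-matrices, so that its minimum $P_t$ among order-$t$ trees also minimizes $\rho_\alpha$), or the pendant-path ``grafting'' move replacing two pendant paths of lengths $p\ge q\ge 1$ at a branch vertex by pendant paths of lengths $p+1$ and $q-1$ (showing via the characteristic polynomial of $A_\alpha$ that $\rho_\alpha$ does not increase, and iterating until only a path remains). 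Throughout, one should keep in mind that $A_\alpha(T)$ need not be symmetric once $T$ has arcs, but it is entrywise nonnegative, so Perron--Frobenius still guarantees that $\rho_\alpha(T)$ is an eigenvalue and that all the entrywise comparisons above are legitimate; this is the only place where the mixed (rather than undirected) setting enters.
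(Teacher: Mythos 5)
Your argument is correct and follows essentially the same route as the paper: delete the arcs to obtain $2n-m-1$ undirected tree components, apply entrywise (Perron--Frobenius) monotonicity of the spectral radius together with pigeonhole to isolate a component of order at least $k$, and then invoke the fact that $P_t$ minimizes $\rho_\alpha$ among undirected trees of order $t$. Note that this ``key lemma'' you flag as the real work is exactly the theorem of Nikiforov, Past\'en, Rojo and Soto cited in the paper as Theorem~\ref{tnprs}, so it can simply be quoted rather than reproved (and of your two sketches for it, the pendant-path grafting argument is the viable one; the Kelmans transformation moves spectral radii \emph{up} toward the star and does not by itself identify the path as the minimizer).
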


It worths mentioning that in the special case $m=2n-2$, Theorem~\ref{upr} and Theorem~\ref{lwr} are proved in \cite{nprs:17}.
The main tool in the proof of Theorem~\ref{upr} is the Kelmans transformation for matrices \cite{kw:21} which will be introduced in Section~\ref{s2}.
A partially ordered set (poset) $\mathcal{G}(n, m)$ of mixed graphs of order $n$ and size $m$ that respects the order of $A_\alpha$-spectral radii is introduced
and the maximal elements in $\mathcal{G}(n, m)$ are characterized in Section~\ref{s2.5}.
The maximal elements in the subposet $\mathcal{T}(n, m)$ of  $\mathcal{G}(n, m)$  induced on the mixed trees are determined in Section~\ref{s2.7}.
Theorem~\ref{upr}  will be proven in Section~\ref{s3}.  Theorem~\ref{lwr} is essentially a consequence of \cite{nprs:17}. We mention this in Section~\ref{s4}.

\section{Preliminaries}\label{s2}

To compare the $A_\alpha$-spectral radii of mixed graphs, we first introduced a useful tool called the Kelmans transformation. The Kelmans transformation,
of an undirected graph, or called graph compression, was defined by A.K. Kelmans in 1981\cite{k:81}. The authors Kao and Weng have generalized it into a matrix version in \cite{kw:21}, and here we will further generalize the transformation to fit the $A_\alpha$ matrix.

Let $C=(c_{ij})$ be a nonnegative square matrix of order $n$.
Fix a $2$-subset $\{a, b\}$ of $[n]$, and assume that $c_{ab}=c_{ba}$.
Choose $k$ such that
\begin{equation}\label{e1}
\max(0,c_{bb}-c_{aa})\leq k\leq c_{bb}\end{equation}
and for $i\in [n]-\{a, b\}$, choose  $t_i$ and $s_i$ such that
\begin{equation}\label{e2}
\max(0, c_{ib}-c_{ia})\leq t_i\leq c_{ib}, \quad \max(0, c_{bi}-c_{ai})\leq s_i\leq c_{bi}.
\end{equation}
We define a new matrix $C_b^a=C_b^a(t_i; s_i; k)$ of order $n$ from $C$ by shifting the portion $k$ from $c_{bb}$ to $c_{aa}$, the portion $t_i$ of $c_{ib}$ to $c_{ia}$
and the portion $s_i$ of $c_{bi}$ to $c_{ai}$ such that in the new matrix $C_b^a=(c'_{ij})$  we have $c'_{aa}\geq c'_{bb}$, $c'_{ia}\geq c'_{ib}$, and  $c'_{ai}\geq c'_{bi}$, for all $i\in [n]-\{a, b\}$. The following is an illustration of $C_b^a$:
$$C_b^a=~~ \kbordermatrix{~        & & j & a &   b\\
                           &~~&              &   &     \\
                       i    &&      c_{ij}   & c_{ia}+t_i  &  c_{ib}-t_i \\
                           &&          &             &       \\
                       a   &&   c_{aj}+s_j          & c_{aa}+k  & c_{ab}   \\
                       b   &&   c_{bj}-s_j          &c_{ba}   & c_{bb}-k    \\ }\qquad
\left\{
  \begin{array}{l}
    i, j\in [n]-\{a, b\},  \\
    c_{ab}=c_{ba}, \\
    \max(0, c_{ib}-c_{ia})\leq t_i\leq c_{ib}, \\
    \max(0, c_{bj}-c_{aj})\leq s_j\leq c_{bj}, \\
   \max(0,c_{bb}-c_{aa})\leq k\leq c_{bb}.
  \end{array}
\right.$$

\noindent Formally, the matrix $C_b^a=(c'_{ij})$ is defined from $C=(c_{ij})$ by setting
\begin{equation}\label{C'}c'_{ij}=\left\{
          \begin{array}{ll}
            c_{ij}, & \hbox{if $i, j\in [n]-\{a, b\}$ or $(i, j)\in \{(a, b), (b, a)\}$;} \\
            c_{ia}+t_i, & \hbox{if $j=a$ and $i\in [n]-\{a, b\}$;} \\
            c_{ib}-t_i, & \hbox{if $j=b$ and $i\in [n]-\{a, b\}$;} \\
            c_{aj}+s_j, & \hbox{if $i=a$ and $j\in [n]-\{a, b\}$;} \\
            c_{bj}-s_j, & \hbox{if $i=b$ and $j\in [n]-\{a, b\}$;} \\
            c_{aa}+k,  & \hbox{if $i=j=a$;} \\
            c_{bb}-k,  & \hbox{if $i=j=b$.} \\
          \end{array}
        \right.
\end{equation}
The matrix $C_b^a$ is referred to as the  {\it Kelmans transformation of $C$ from $b$ to $a$ with respect to $(t_i;s_i;k)$}.
In the above setting, if $C=(c_{ij})$ is the adjacency matrix of an undirected graph of order $n$ and assume that $C_b^a$ is also a symmetric binary matrix, then by the assumptions in (\ref{e1})-(\ref{e2}), $t_i=\max(0, c_{ib}-c_{ia})=\max(0, c_{bi}-c_{ai})=s_i\in \{0, 1\}$ and $k=0$ are uniquely determined from $C$. In this situation, we don't need to mention $(t_i;s_j;k)$ and the Kelmans transformation $C_b^a$ of $C$ from $b$ to $a$ is the adjacency matrix of the graph obtained by the Kelmans transformation of an undirected graph defined by A.K. Kelmans \cite{k:81}.

P. Csikv\'{a}ri proved that the largest real eigenvalues of adjacency matrices will not be decreased after a Kelmans transformation of an undirected graph \cite{c:09}. The following theorem is a generalization of this result to a nonnegative matrix which is not necessary to be symmetric.

\begin{Theorem}\label{thm1}\cite{kw:21} Let $C=(c_{ij})$ denote a nonnegative square matrix of order $n$ such that $c_{ab}=c_{ba}$ for some $1\leq a, b\leq n$. Choose $k, t_i, s_i$ for $i\in [n]-\{a, b\}$ that satisfy (\ref{e1}),(\ref{e2}). Let $C_b^a=C_b^a(t_i; s_i; k)$ be the Kelmans transformation from $b$ to $a$ with respect to $(t_i;s_j;k)$. Then $\rho(C)\leq \rho(C_b^a).$ \qed
\end{Theorem}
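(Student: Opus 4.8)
The plan is to realize $C_b^a$ as a composition of elementary steps, each of which visibly does not decrease the Perron root, and then to invoke a Perron--Frobenius argument on each step. First I would reduce to the case where $C$ is irreducible: if $C$ is reducible one passes to $C+\epsilon J$ (with $J$ the all-ones matrix), carries out the Kelmans transformation with the same parameters $(t_i;s_i;k)$ — note that conditions \eqref{e1}--\eqref{e2} are stable under adding $\epsilon$ to every entry once $\epsilon$ is small, since the binding inequalities are the ones of the form $0\le t_i$ and $t_i\le c_{ib}$, the latter only loosening — and then lets $\epsilon\to 0$, using continuity of $\rho$. So assume $C$ is irreducible with Perron vector $x>0$ and Perron value $\rho=\rho(C)$, normalized so that $x_a\ge x_b$ (this is the crucial normalization: the transformation moves mass toward the "heavier" coordinate $a$, so we want $a$ to be the coordinate of larger Perron weight; if instead $x_a<x_b$ we would symmetrically swap the roles).

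The key computation is to bound the Rayleigh-type quotient $\frac{x^{\!\top} C_b^a\, x}{x^{\!\top} x}$ from below by $\rho$, or more precisely to show $C_b^a x \ge \rho x$ entrywise after a suitable coordinate comparison — but that naive inequality can fail coordinatewise, so instead I would use the standard trick for non-symmetric Perron estimates: let $y>0$ be the left Perron vector of $C_b^a$ (it exists since $C_b^a$ is nonnegative; if $C_b^a$ is reducible, again perturb). Then $\rho(C_b^a)\, y^{\!\top} x = y^{\!\top} C_b^a\, x$, so it suffices to prove $y^{\!\top} C_b^a\, x \ge \rho(C)\, y^{\!\top} x = y^{\!\top} C x$, i.e. $y^{\!\top}(C_b^a - C)x \ge 0$. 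Now $C_b^a - C$ is a sum of "transfer" matrices: for each $i\in[n]-\{a,b\}$ the column move contributes $t_i(e_i e_a^{\!\top} - e_i e_b^{\!\top})$, the row move contributes $s_i(e_a e_i^{\!\top} - e_b e_i^{\!\top})$, and the diagonal move contributes $k(e_a e_a^{\!\top} - e_b e_b^{\!\top})$. Pairing with $y^{\!\top}(\cdot)x$, these contribute $t_i y_i(x_a - x_b)$, $s_i x_i(y_a - y_b)$, and $k(y_a x_a - y_b x_b)$ respectively. All the $t_i,s_i,k$ are nonnegative and $x_a\ge x_b$, so the column terms are $\ge 0$ immediately; the row and diagonal terms require $y_a\ge y_b$, which is exactly the statement that the left Perron vector of the transformed matrix is also heavier at $a$. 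This last monotonicity — that the transformation, which pushes row-mass and the diagonal toward $a$, also forces $y_a\ge y_b$ on the left eigenvector — is where the real work lies.

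To close that gap I would argue directly from the left eigen-equation $y^{\!\top} C_b^a = \rho(C_b^a)\, y^{\!\top}$. Subtracting the $b$-column relation from the $a$-column relation and using $c'_{ia}\ge c'_{ib}$, $c'_{aa}+k' \ge c'_{bb}-k'$-type inequalities guaranteed by the defining property of $C_b^a$ (namely $c'_{aa}\ge c'_{bb}$, $c'_{ia}\ge c'_{ib}$, $c'_{ai}\ge c'_{bi}$), one gets an inequality of the form $(\rho(C_b^a) - c'_{aa})(y_a - y_b) \ge (\text{nonnegative combination of } y_i\ge 0)$, and since $\rho(C_b^a) \ge$ any diagonal entry of an irreducible nonnegative matrix with a positive eigenvector, the coefficient $\rho(C_b^a)-c'_{aa}$ is $\ge 0$; combined with sign bookkeeping this yields $y_a\ge y_b$. (Alternatively, and perhaps more cleanly, one can avoid the left eigenvector entirely by the doubly-indexed approach of \cite{c:09,kw:21}: show $C_b^a$ and $C$ have a common quadratic-form comparison, or decompose the single transformation into two half-steps — first only the column moves $t_i$, which give $C_b^a x' \ge \rho x'$ for a vector $x'$ obtained from $x$ by swapping $x_a\leftrightarrow x_b$ where needed, hence $\rho$ does not drop; then the row moves $s_i$ and diagonal move $k$ by applying the column argument to the transpose. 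The obstacle is the same either way: handling the non-symmetry so that the "heavier coordinate" is consistently $a$ on both sides.) Since Theorem~\ref{thm1} is quoted from \cite{kw:21} rather than proved here, I would present the proof at the level of these reductions and cite \cite{c:09} for the symmetric prototype, with the left-Perron-vector monotonicity as the one lemma needing a careful self-contained argument.
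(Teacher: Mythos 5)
The paper does not actually prove Theorem~\ref{thm1}: it is imported verbatim from \cite{kw:21} (with the remark that the proof there, given under the extra hypothesis $c_{aa}=c_{bb}$, goes through unchanged), so there is no in-paper argument to match yours against. Your route --- perturb to the positive case, pair the right Perron vector $x$ of $C$ with the left Perron vector $y$ of $C_b^a$, and reduce everything to $y^{\top}(C_b^a-C)x=\sum_i t_i y_i(x_a-x_b)+\sum_i s_i x_i(y_a-y_b)+k(y_ax_a-y_bx_b)\geq 0$ --- is a legitimate, self-contained proof strategy, and the decomposition and the identity $\rho(C_b^a)\,y^{\top}x=y^{\top}C_b^ax\geq\rho(C)\,y^{\top}x$ are correct. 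Two points need tightening before this is a proof. First, the normalization $x_a\geq x_b$ is not free: the transformation is anchored at $a$, so ``swapping the roles'' requires showing that $C_a^b(\tilde t_i;\tilde s_i;\tilde k)$ with $\tilde t_i=c_{ia}+t_i-c_{ib}$, $\tilde s_i=c_{ai}+s_i-c_{bi}$, $\tilde k=c_{aa}+k-c_{bb}$ satisfies (\ref{e1})--(\ref{e2}) and is permutation-similar to $C_b^a$; this is the matrix analogue of Lemma~\ref{l2.2}(i), which the paper proves only for graphs, and it must be checked (it does hold, precisely because $t_i\geq\max(0,c_{ib}-c_{ia})$ etc.). Second, in the lemma $y_a\geq y_b$, subtracting the two column relations and using $c'_{ab}=c'_{ba}=c_{ab}$ gives exactly $\bigl(\rho(C_b^a)+c_{ab}-c'_{aa}\bigr)(y_a-y_b)=\sum_{i\neq a,b}y_i(c'_{ia}-c'_{ib})+y_b(c'_{aa}-c'_{bb})\geq 0$; to conclude $y_a\geq y_b$ you need the coefficient to be \emph{strictly} positive, not merely $\geq 0$ as you assert, and this follows from the strict inequality $\rho(M)>m_{aa}$ valid for irreducible $M$ of order at least $2$ (a proper principal submatrix of an irreducible nonnegative matrix has strictly smaller spectral radius). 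The irreducibility of $C_b^a$, which your sketch leaves vague, comes for free from your own perturbation: with the parameters held fixed one has $(C+\epsilon J)_b^a=C_b^a+\epsilon J$, so both matrices become positive simultaneously and the limit $\epsilon\to 0$ finishes the argument. With these two repairs your proposal is a complete and arguably more transparent proof than a bare citation.
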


It worths mentioning that Theorem~\ref{thm1} appearing in \cite{kw:21}  has the additional assumption $c_{aa}=c_{bb}$.
The interested reader might trace the proof in \cite{kw:21} and find that the same proof works fine for this slightly more general situation here.

As in the case of undirected graph, if $C$ in Theorem~\ref{thm1} is the adjacency matrix of a mixed graph $G$ and assume that $C_b^a$ is also an adjacency matrix of some mixed graph, then $t_i, s_i\in \{0, 1\}$ and $k=0$ are uniquely determined from $C$.
We use $G_b^a$ to denote the mixed graph whose adjacency matrix is $C_b^a$ and called $G_b^a$ the {\it Kelmans transformation} of mixed graph $G$ from $b$ to $a$. Notice that when the notation $G_b^a$ appears, we always assume that $a, b\in V(G)$ are distinct and have no arc, i.e. $\vv{ab}\notin E(G)$ and $\vv{ba}\notin E(G)$. Figure~\ref{fig1} shows how the Kelmans transformation on mixed graph works.

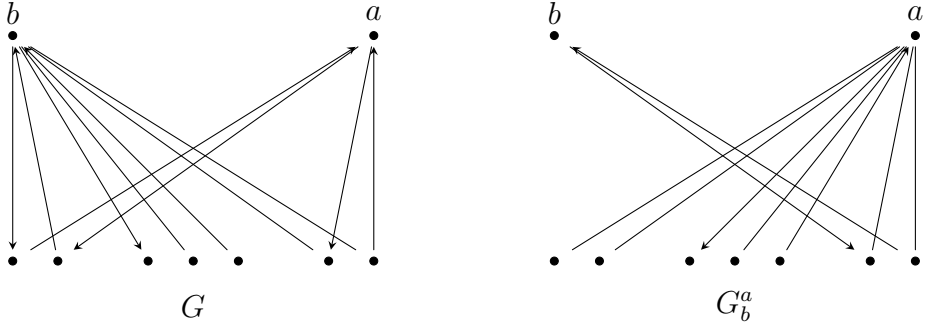
\begin{figure}
\centering
\begin{tikzpicture}[line cap=round,line join=round,>=triangle 45,x=0.6cm,y=0.6cm]

\draw  [stealth-](0.05,9.75)-- (0.95,5.25);
\draw  [-stealth](0.15,9.75)-- (2.85,5.25);
\draw  (0.2,9.75)-- (3.8,5.25);
\draw  [stealth-](0.25,9.75)-- (4.75,5.25);
\draw  (0.35,9.75)-- (6.65,5.25);
\draw  (7.6,5.25)-- (0.4,9.75);
\draw  [-stealth](0.4,5.25)-- (7.60,9.75);
\draw  [stealth-](1.35,5.25)-- (7.65,9.75);
\draw  [stealth-](7.05,5.25)-- (7.95,9.75);
\draw  [-stealth](8.,5.25)-- (8.,9.75);
\draw [-stealth] (0.,9.75) -- (0.,5.25);

\draw [fill=black] (0.,10.) circle (1.5pt);
\draw [fill=black] (0.,5.) circle (1.5pt);
\draw [fill=black] (1.,5.) circle (1.5pt);
\draw [fill=black] (3.,5.) circle (1.5pt);
\draw [fill=black] (4.,5.) circle (1.5pt);
\draw [fill=black] (5.,5.) circle (1.5pt);
\draw [fill=black] (7.,5.) circle (1.5pt);
\draw [fill=black] (8.,5.) circle (1.5pt);
\draw [fill=black] (8.,10.) circle (1.5pt);

\draw  [-stealth](19.75,9.75)-- (15.25,5.25);
\draw  (19.8,9.75)-- (16.2,5.25);
\draw  [stealth-](19.85,9.75)-- (17.15,5.25);
\draw  [-stealth](12.35,9.75)-- (18.65,5.25);
\draw  [-stealth](19.6,5.25)-- (12.4,9.75);
\draw  (12.4,5.25)-- (19.60,9.75);
\draw  (13.35,5.25)-- (19.65,9.75);
\draw  (19.05,5.25)-- (19.95,9.75);
\draw  (20.,5.25)-- (20.,9.75);

\draw [fill=black] (12.,10.) circle (1.5pt);
\draw [fill=black] (12.,5.) circle (1.5pt);
\draw [fill=black] (13.,5.) circle (1.5pt);
\draw [fill=black] (15.,5.) circle (1.5pt);
\draw [fill=black] (16.,5.) circle (1.5pt);
\draw [fill=black] (17.,5.) circle (1.5pt);
\draw [fill=black] (19.,5.) circle (1.5pt);
\draw [fill=black] (20.,5.) circle (1.5pt);
\draw [fill=black] (20.,10.) circle (1.5pt);

\draw (0,10.5) node{$b$};
\draw (8,10.5) node{$a$};
\draw (12,10.5) node{$b$};
\draw (20,10.5) node{$a$};
\draw (4,4) node{$G$};
\draw (16,4) node{$G_b^a$};

\end{tikzpicture}
\caption{Kelmans transformation on mixed graph $G$.}
\label{fig1}
\end{figure}

For a mixed graph $G$, let $N^+_{G}(u):=\{v \colon \vv{uv}\in E(G)~{\rm or~} uv\in E(G)\}$ be the set of {\it out-neighbors} of $u$, $N^-_{G}(u):=\{v \colon \vv{vu}\in E(G)~{\rm or~} uv\in E(G)\}$ be the set of {\it in-neighbors} of $u$, and  $N_G(u):=N^+_{G}(u)\cup N^-_{G}(u)$ be the set of
{\it neighbors} of $u$.   The number $d^+_G(u):=|N^+_{G}(u)|$ is called the {\it out-degree} of $u$ in $G$, and  the number $d_G(u):=|N^+_{G}(u)|+|N^-_{G}(u)|$ is called the {\it degree} of $u$ in $G$.  The sequence $d(G):=(d_G(u))_{u\in V(G)}$  in descending order is called the {\it degree sequence} of $G$. In the rest of this paper, the order of degree sequences are considered in dictionary order, that is, $(a_1,a_2,\ldots,a_n)>(b_1,b_2,\ldots,b_n)$ if  for the minimum $i$ with $a_i\not=b_i$, we have $a_i>b_i$. The (i) of the following lemma in mixed graph is generalized from its undirected graph version \cite{k:81}.

\begin{Lemma}\label{l2.2} Let $G$ be a mixed graph and distinct $a, b\in V(G)$ have no arc.  Then the following (i)-(ii) hold.
\begin{enumerate}
\item[(i)] The involution $f:V(G_b^a)\rightarrow V(G_a^b)$ defined by
$$f(x)=\left\{
       \begin{array}{ll}
         a, & \hbox{if $x=b$;} \\
         b, & \hbox{if $x=a$;} \\
         x, & \hbox{otherwise}
       \end{array}
     \right.$$
is a graph isomorphism from $G_b^a$ to $G_a^b$.
\item[(ii)]  In dictionary order, $d(G_b^a)\geq d(G)$.  Moreover, the following (a)-(c) are equivalent.
\begin{enumerate}
\item[(a)] $d(G_b^a)=d(G)$;
\item[(b)] $G$ is isomorphic to $G_b^a$;
\item[(c)] $N^+_G(a)-\{b\}\subseteq N^+_G(b)-\{a\}$ and $N^-_G(a)-\{b\}\subseteq N^-_G(b)-\{a\}$; or  $N^+_G(b)-\{a\}\subseteq N^+_G(a)-\{a\}$ and $N^-_G(b)-\{a\}\subseteq N^-_G(a)-\{b\}$.
\end{enumerate}
\end{enumerate}
\end{Lemma}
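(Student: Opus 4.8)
\emph{Proof proposal.}
For part (i) the plan is a direct computation with the defining formula (\ref{C'}). Since $a,b$ carry no arc, the hypotheses force $t_i,s_i\in\{0,1\}$ and $k=0$, and (\ref{C'}) then says precisely that $G_b^a$ is obtained from $G$ by leaving untouched every adjacency not meeting $\{a,b\}$ as well as the (necessarily undirected, or absent) $ab$-adjacency, and by resetting, among $[n]-\{a,b\}$, the out-neighbourhood of $a$ to $N_G^+(a)\cup N_G^+(b)$ and that of $b$ to $N_G^+(a)\cap N_G^+(b)$, and analogously for the in-neighbourhoods. Exchanging the labels $a\leftrightarrow b$ turns this recipe into the one defining $G_a^b$, since ``union'' and ``intersection'' are symmetric in $a,b$ and so is the $ab$-adjacency; hence $f$ is an isomorphism $G_b^a\to G_a^b$. (Equivalently, if $P$ is the permutation matrix of the transposition $(a\,b)$, a short case check on (\ref{C'}) gives $P\,C_b^a\,P=C_a^b$.)

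For the inequality in (ii) I would first record that the transformation fixes the degree of every $v\in[n]-\{a,b\}$: from (\ref{C'}), $c'_{va}+c'_{vb}=c_{va}+c_{vb}$ and $c'_{av}+c'_{bv}=c_{av}+c_{bv}$, so $d_{G_b^a}(v)=d_G(v)$; summing these identities together with $c'_{aa}+c'_{bb}=c_{aa}+c_{bb}$ and $c'_{ab}+c'_{ba}=c_{ab}+c_{ba}$ gives $d_{G_b^a}(a)+d_{G_b^a}(b)=d_G(a)+d_G(b)$. Next, using $c'_{ia}=c_{ia}+t_i=\max(c_{ia},c_{ib})$, the analogous identities in the other three positions, and $c_{ab}=c_{ba}$, I would show $d^+_{G_b^a}(a)\ge\max(d^+_G(a),d^+_G(b))$ and $d^-_{G_b^a}(a)\ge\max(d^-_G(a),d^-_G(b))$, whence $d_{G_b^a}(a)\ge\max(d_G(a),d_G(b))$, while $c'_{aa}\ge c'_{bb}$, $c'_{ia}\ge c'_{ib}$, $c'_{ai}\ge c'_{bi}$ force $d_{G_b^a}(a)\ge d_{G_b^a}(b)$. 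Since $d(G)$ and $d(G_b^a)$ share the $n-2$ degrees indexed by $[n]-\{a,b\}$ and differ only in that $\{d_G(a),d_G(b)\}$ is replaced by a same-sum pair whose maximum is $\ge\max(d_G(a),d_G(b))$, inserting these pairs into the common sorted list of the other degrees and scanning from the largest entry down shows that the first place the two sorted sequences can differ carries the strictly larger entry on the $G_b^a$ side; thus $d(G_b^a)\ge d(G)$ in dictionary order, with equality if and only if $d_{G_b^a}(a)=\max(d_G(a),d_G(b))$.

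The equivalences then follow quickly. The implication (b)$\Rightarrow$(a) is immediate. For (a)$\Rightarrow$(c): equality of degree sequences gives $d_{G_b^a}(a)=\max(d_G(a),d_G(b))$, and substituting this into $d_{G_b^a}(a)=d^+_{G_b^a}(a)+d^-_{G_b^a}(a)\ge\max(d^+_G(a),d^+_G(b))+\max(d^-_G(a),d^-_G(b))\ge\max(d_G(a),d_G(b))$ makes all inequalities equalities; the middle one can be tight only if one of $a,b$ — say $a$ — maximizes out-degree \emph{and} in-degree. Then $d^+_{G_b^a}(a)=d^+_G(a)$, but $d^+_{G_b^a}(a)=d^+_G(a)+\sum_{j\in[n]-\{a,b\}}s_j$ with each $s_j\in\{0,1\}$, so every $s_j=0$, i.e. $N_G^+(b)-\{a\}\subseteq N_G^+(a)-\{b\}$; the in-degree equality yields $N_G^-(b)-\{a\}\subseteq N_G^-(a)-\{b\}$ the same way, which is the second alternative of (c) (and the first if $b$ is the maximizer). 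For (c)$\Rightarrow$(b): under the second alternative every $t_j=s_j=0$ and $k=0$, so $C_b^a=C$ and $G_b^a=G$; under the first alternative the same computation applied to $G_a^b$ gives $G_a^b=G$, and then $G\cong G_a^b=G_b^a$ by (i). This closes the cycle (a)$\Rightarrow$(c)$\Rightarrow$(b)$\Rightarrow$(a).

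The step I expect to be most delicate is (a)$\Rightarrow$(c): one must track out-neighbourhoods, in-neighbourhoods and the exceptional $ab$-adjacency simultaneously, and the $\max$-splitting argument pinning down which of $a,b$ is the dominant vertex is where the care is needed. The dictionary-order comparison of the two sorted degree sequences is routine but should be written with a little attention to ties.
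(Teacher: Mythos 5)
Your proof is correct and follows essentially the same route as the paper's: the union/intersection description of the neighbourhoods of $a$ and $b$ in $G_b^a$ gives part (i), and part (ii) is the same degree-sequence comparison followed by the same cycle (b)$\Rightarrow$(a)$\Rightarrow$(c)$\Rightarrow$(b). The only divergence is in (a)$\Rightarrow$(c), where you split degrees into out- and in-parts and locate a common maximizer while the paper directly matches $\{d_G(a),d_G(b)\}$ with $\{d_{G_b^a}(a),d_{G_b^a}(b)\}$; both work, and your version is, if anything, more explicit than the paper's rather terse argument.
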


\begin{proof} Excluding the two vertices $a, b$ which are either with an undirected edges or without any directed arcs by the assumption, we have the following three observations of neighbor sets from the definition of Kelmans transformation on $G$ from $b$ to $a$.
(1) the set of out-neighbors (resp. in-neighbors) of $b$ in $G_b^a$ is the union of the set of  out-neighbors  (resp. in-neighbors) of $a$ in $G$ and the set of  out-neighbors  (resp. in-neighbors) of $b$ in $G$; (2) the set of  out-neighbors  (resp. in-neighbors) of $a$ in $G_b^a$ is  the intersection of set of the out-neighbors  (resp. in-neighbors) of $a$ in $G$ and the set of out-neighbors  (resp. in-neighbors) of $b$ in $G$;
(3) the set of out-neighbors (resp. in-neighbors) of  $x\not=a, b$ in $G_b^a$ is the same as that in $G$.
From the above three observations, we find that vertices $a, b, x$ in $G_b^a$ play the role of $b, a, x$ respectively in $G_a^b$. This proves (i).
\medskip

\noindent (ii) In the proof of (i), we also have $d_G(x)=d_{G_b^a}(x)$ for $x\in V(G)-\{a, b\}$
and in dictionary order $(d_{G_b^a}(a),  d_{G_b^a}(b))\geq (\max(d_G(a), d_G(b)), \min(d_G(a), d_G(b)))$, together implying $d(G_b^a)\geq d(G)$. Next we prove that (a), (b) and (c) are equivalent.
\medskip

\noindent ((b) $\Rightarrow$ (a)) This is clear.
\medskip

\noindent ((a) $\Rightarrow$ (c)) Suppose $d(G_b^a)=d(G)$. From the proof of (ii) above, we have
$\{d_G(a), d_G(b)\}=\{d_{G_b^a}(a), d_{G_b^a}(b)\}$. If $d_G(a)=d_{G_b^a}(b)$ then  $d_G(b)=d_{G_b^a}(a)\geq d_{G_b^a}(b)=d_G(a)$, which implies $N^+_G(a)-\{b\}\subseteq N^+_G(b)-\{a\}$ and $N^-_G(a)-\{b\}\subseteq N^-_G(b)-\{a\}$. If $d_G(a)=d_{G_b^a}(a)$ then  $d_G(b)=d_{G_b^a}(b)$, which implies $N^+_G(b)-\{a\}\subseteq N^+_G(a)-\{b\}$ and $N^-_G(b)-\{a\}\subseteq N^-_G(a)-\{b\}$.
\medskip

\noindent ((c) $\Rightarrow$ (b)) If  $N^+_G(a)-\{b\}\subseteq N^+_G(b)-\{a\}$ and $N^-_G(a)-\{b\}\subseteq N^-_G(b)-\{a\}$ then $G=G_a^b$ and the latter is isomorphic to $G_b^a$  by (i). If  $N^+_G(b)-\{a\}\subseteq N^+_G(a)-\{b\}$ and $N^-_G(b)-\{a\}\subseteq N^-_G(a)-\{b\}$ then $G=G_b^a$.
\end{proof}

For a  square matrix $M$, let ${\rm char}(M):={\rm det}(\lambda I-M)$ denote the {\it characteristic polynomial} of $M$. The following lemma
is immediate from the definition of  characteristic polynomial of $M$.

\begin{Lemma}\label{l2.6}
For an $n\times n$ nonnegative matrix $M$, if $$M=\begin{pmatrix}
M_1& M_2\\ 0&  M_3
\end{pmatrix}\quad {\rm ~or~}\quad \begin{pmatrix}
M_1& 0\\ M_2 &  M_3
\end{pmatrix}$$ where $M_1,M_3$ are square matrices, then ${\rm char}(M)={\rm char}(M_1)\cdot {\rm char}(M_2)$.\qed
\end{Lemma}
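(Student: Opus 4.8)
The plan is to reduce this to the standard linear-algebra fact that the determinant of a block triangular matrix is the product of the determinants of its diagonal blocks. (Strictly speaking the right-hand side should read ${\rm char}(M_1)\cdot{\rm char}(M_3)$: the block $M_2$ need not be square, so ${\rm char}(M_2)$ is undefined, and the factorization runs over the two diagonal blocks.) First I would observe that if $M_1$ has order $r$, then in the first case $\lambda I-M=\begin{pmatrix}\lambda I_r-M_1 & -M_2\\ 0 & \lambda I_{n-r}-M_3\end{pmatrix}$, which is again block upper triangular with a zero block in the lower-left corner; in the second case one gets the analogous block lower triangular matrix. Since ${\rm char}(M)=\det(\lambda I-M)$ by definition, it suffices to prove that $\det\begin{pmatrix}B_1 & B_2\\ 0 & B_3\end{pmatrix}=\det(B_1)\det(B_3)$ whenever $B_1,B_3$ are square.

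For that identity I would use the Leibniz expansion $\det(N)=\sum_\sigma{\rm sgn}(\sigma)\prod_i N_{i\sigma(i)}$ over permutations $\sigma$ of $[n]$. Because the $(n-r)\times r$ lower-left block is zero, any $\sigma$ whose term is possibly nonzero cannot send an index $i>r$ to an index $\sigma(i)\leq r$; hence $\sigma$ maps $\{r+1,\dots,n\}$ into itself, and therefore also $\{1,\dots,r\}$ into itself. Such $\sigma$ factor uniquely as $\sigma=\sigma_1\times\sigma_2$ with $\sigma_1$ a permutation of $\{1,\dots,r\}$, $\sigma_2$ a permutation of $\{r+1,\dots,n\}$, and ${\rm sgn}(\sigma)={\rm sgn}(\sigma_1){\rm sgn}(\sigma_2)$. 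The double sum then splits as $\left(\sum_{\sigma_1}{\rm sgn}(\sigma_1)\prod_i(B_1)_{i\sigma_1(i)}\right)\left(\sum_{\sigma_2}{\rm sgn}(\sigma_2)\prod_j(B_3)_{j\sigma_2(j)}\right)=\det(B_1)\det(B_3)$. An equally short alternative is to expand by cofactors down the first column $r$ successive times, at each stage using that the column has entries only in its top $r$ positions. The block lower triangular case follows at once by transposing, since $\det(N)=\det(N^\top)$ and transposition swaps the two off-diagonal blocks while fixing the diagonal ones.

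There is essentially no obstacle here; the lemma is genuinely routine, as the paper indicates. The only points that need care are the index bookkeeping in the permutation argument (justifying that $\sigma$ must preserve the index block $\{1,\dots,r\}$) and flagging the typographical slip ${\rm char}(M_2)\to{\rm char}(M_3)$ so that the statement used later, namely the factorization of ${\rm char}(A_\alpha)$ along the block structure coming from a vertex ordering, is applied correctly.
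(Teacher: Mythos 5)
Your proof is correct. The paper offers no argument for this lemma at all---it is stated as ``immediate from the definition'' with a \qed---so your Leibniz-expansion derivation of the block-triangular determinant identity (plus the transpose reduction for the lower-triangular case) is the standard argument the authors are implicitly invoking, and you are also right that the conclusion should read ${\rm char}(M_1)\cdot{\rm char}(M_3)$, a typo in the statement, since $M_2$ need not be square and the later application in Lemma~\ref{char} indeed uses the product over the diagonal blocks.
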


For an $n\times n$ matrix $M$ and a partition $\Pi=\{\pi_1,\pi_2\ldots,\pi_\ell\}$ of $[n]$, the $\ell\times\ell$ matrix $\Pi(M)=(m'_{ij})$, where $$m'_{ab}=\frac{1}{|\pi_a|}\sum_{i\in\pi_a,j\in\pi_b}m_{ij}\qquad (1\leq a, b\leq \ell),$$
is called the {\it quotient matrix} of $M$ with respect to $\Pi$. Furthermore, if for all $1\leq a,b\leq\ell$ and $i\in\pi_a$,
$\sum_{j\in\pi_b}m_{ij}=m'_{ab}$, then $\Pi(M)$ is called the {\it equitable quotient matrix} of $M$ with respect to $\Pi$. The following well-known lemma is useful on the calculating of spectral index. See \cite{ap:18,clw:22} for recent proofs.

\begin{Lemma}\label{equ_quo}
If $\Pi(M)$ be the equitable quotient matrix of a nonnegative matrix $M$, then $\rho(M)=\rho(\Pi(M))$. \qed
\end{Lemma}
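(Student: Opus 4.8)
The plan is to encode the partition by its \emph{characteristic matrix} and to translate the equitability hypothesis into a single intertwining identity, from which both inequalities $\rho(\Pi(M))\le\rho(M)$ and $\rho(M)\le\rho(\Pi(M))$ fall out via the Perron--Frobenius theorem for nonnegative matrices. Concretely, write $\Pi=\{\pi_1,\dots,\pi_\ell\}$ and set $B:=\Pi(M)$, which is again nonnegative since its entries are nonnegative averages of entries of $M$. Let $S=(s_{ia})$ be the $n\times\ell$ matrix with $s_{ia}=1$ if $i\in\pi_a$ and $s_{ia}=0$ otherwise. Its columns are nonzero and pairwise orthogonal, so $S$ has full column rank. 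First I would check that the condition defining ``equitable'', namely $\sum_{j\in\pi_b}m_{ij}=m'_{ab}$ for every $i\in\pi_a$, is precisely the statement $MS=SB$: the $(i,b)$ entry of $MS$ is $\sum_{j\in\pi_b}m_{ij}$, while the $(i,b)$ entry of $SB$ is $m'_{ab}$, where $a$ is the index of the part containing $i$.

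For $\rho(B)\le\rho(M)$: since $B\ge 0$, Perron--Frobenius supplies an eigenvector $v\neq 0$ with $Bv=\rho(B)\,v$. As $S$ has full column rank, $Sv\neq 0$, and $M(Sv)=S(Bv)=\rho(B)(Sv)$, so $\rho(B)$ is an eigenvalue of $M$; since $\rho(M)$ bounds the modulus of every eigenvalue of $M$ and $\rho(B)\ge 0$, this gives $\rho(B)\le\rho(M)$.

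For the reverse inequality I would instead use a \emph{left} Perron vector of $M$: since $M\ge 0$, there is $z\ge 0$, $z\neq 0$, with $z^{\top}M=\rho(M)\,z^{\top}$. Put $u:=S^{\top}z$; then $u\ge 0$, and $u\neq 0$ because $z$ has a positive coordinate, which lies in some part $\pi_a$, forcing $u_a>0$. Using $MS=SB$ again, $u^{\top}B=z^{\top}SB=z^{\top}MS=\rho(M)\,z^{\top}S=\rho(M)\,u^{\top}$, so $\rho(M)$ is an eigenvalue of $B^{\top}$, hence of $B$, and therefore $\rho(M)\le\rho(B)$. Combining the two inequalities gives $\rho(M)=\rho(\Pi(M))$.

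There is no genuinely hard analytic step here; the one point requiring care is that $M$ need not be symmetric, so the second inequality cannot be obtained by transposing the argument for the first — an equitable partition for $M$ is generally \emph{not} equitable for $M^{\top}$. The load-bearing observation is the asymmetry of roles: use the right Perron eigenvector of the \emph{small} matrix $B$ for one direction and the (nonnegative) left Perron eigenvector of the \emph{big} matrix $M$ for the other, both guaranteed by Perron--Frobenius. It is also worth recording at the outset that ``$\rho(\cdot)$'', defined in this paper as the largest real eigenvalue of a nonnegative matrix, coincides with the spectral radius, which is what justifies the passages ``eigenvalue of $M$ $\Rightarrow$ at most $\rho(M)$''.
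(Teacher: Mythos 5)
Your proof is correct: the identification of equitability with the intertwining relation $MS=SB$ is exact, the full column rank of $S$ gives $Sv\neq 0$, and the use of a right Perron vector of $\Pi(M)$ for one inequality and a nonnegative left Perron vector of $M$ for the other (correctly flagging that equitability does not transpose) closes both directions. The paper itself supplies no proof of this lemma, only citations to the literature, and your argument is essentially the standard one given there, so there is nothing to contrast.
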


The following is a well-known consequence of Perron–Frobenius theorem \cite{bh:12}.

\begin{Lemma}\label{perron}
If $N$ is a nonnegative square matrix and $M$ is a nonnegative matrix of the same size with $M\leq N$, or $M$ is a nonnegative submatrix of $N$  then $\rho(M)\leq \rho(N)$. \qed
\end{Lemma}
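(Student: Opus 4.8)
The plan is to reduce both cases to entrywise domination $0\le M\le N$ and then invoke Gelfand's formula $\rho(A)=\lim_{k\to\infty}\|A^k\|^{1/k}$, valid for any square matrix and any submultiplicative matrix norm. I would use the norm $\|A\|:=\sum_{i,j}|a_{ij}|$; a short computation, $\sum_{i,j}|(AB)_{ij}|\le\sum_{i,k}|a_{ik}|\sum_{j}|b_{kj}|\le\|A\|\,\|B\|$, shows it is submultiplicative, and it is evidently monotone on nonnegative matrices, i.e. $0\le B\le C$ entrywise implies $\|B\|\le\|C\|$.

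First suppose $0\le M\le N$ entrywise. Because all entries are nonnegative, an easy induction on $k$ gives $0\le M^k\le N^k$ entrywise for every $k\ge 1$, since $(M^{k+1})_{ij}=\sum_\ell (M^k)_{i\ell}M_{\ell j}\le \sum_\ell (N^k)_{i\ell}N_{\ell j}=(N^{k+1})_{ij}$. Hence $\|M^k\|\le\|N^k\|$ for all $k$, and taking $k$-th roots and letting $k\to\infty$ yields $\rho(M)\le\rho(N)$.

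Now suppose $M$ is a (principal) submatrix of $N$, say $M=N[S]$ is obtained by deleting the rows and columns outside some $S\subseteq[n]$. Let $\hat N$ be the $n\times n$ matrix agreeing with $N$ on $S\times S$ and zero elsewhere. Then $0\le\hat N\le N$ entrywise because $N$ is nonnegative, so the first part gives $\rho(\hat N)\le\rho(N)$; on the other hand, after a simultaneous permutation of rows and columns putting the indices of $S$ first, $\hat N$ is block diagonal with blocks $M$ and a zero block, so $\hat N^k$ carries the block $M^k$ in the corresponding position and $\rho(\hat N)=\lim_k\|\hat N^k\|^{1/k}=\lim_k\|M^k\|^{1/k}=\rho(M)$ (alternatively $\mathrm{char}(\hat N)=\lambda^{\,n-|S|}\,\mathrm{char}(M)$, cf. Lemma~\ref{l2.6}, and $\rho(M)\ge 0$ since $M$ is nonnegative). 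Combining, $\rho(M)=\rho(\hat N)\le\rho(N)$.

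There is no serious obstacle here; the lemma is classical. The one point worth care is that one should resist arguing directly with Perron vectors — pairing a nonnegative left Perron vector $x$ of $N$ against a nonnegative Perron vector $y$ of $M$ to get $\rho(M)\,x^{\mathsf T}y=x^{\mathsf T}My\le x^{\mathsf T}Ny=\rho(N)\,x^{\mathsf T}y$ — because in the reducible case $x^{\mathsf T}y$ may vanish, so one cannot divide; the norm/Gelfand route avoids this entirely. It is also worth stating the second hypothesis as a \emph{principal} submatrix (equivalently, one obtained by deleting vertices of the underlying mixed graph): for a square submatrix built from different row and column index sets the inequality can genuinely fail.
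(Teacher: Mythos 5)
Your proof is correct, but it is worth noting that the paper does not actually prove Lemma~\ref{perron}: it states it with an immediate \qed as ``a well-known consequence of Perron--Frobenius theorem'' and cites the textbook of Brouwer and Haemers. So there is no argument in the paper to compare against; what you have supplied is a self-contained elementary proof of the cited fact. Your route --- entrywise domination $M^k\le N^k$, the monotone submultiplicative norm $\|A\|=\sum_{i,j}|a_{ij}|$, and Gelfand's formula, with the principal-submatrix case reduced to the domination case via the zero-padded matrix $\hat N$ --- is a standard and clean way to do this, and it correctly sidesteps the division-by-$x^{\mathsf T}y$ issue that a naive Perron-vector pairing runs into for reducible matrices. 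Your caveat that the submatrix must be \emph{principal} is also well taken (e.g.\ the $1\times 1$ submatrix in position $(1,2)$ of $\left(\begin{smallmatrix}0&1\\0&0\end{smallmatrix}\right)$ has spectral radius $1>0$); this is harmless for the paper, since its only use of the submatrix clause is for the principal submatrix $A_\alpha(T)[C_1]$ in the proof of Theorem~\ref{lwr}, but it does mean the lemma as literally worded is slightly too generous. The only cosmetic remark is that your parenthetical appeal to Lemma~\ref{l2.6} inherits that lemma's typo (its conclusion should read ${\rm char}(M_1)\cdot{\rm char}(M_3)$); your primary argument via $\|\hat N^k\|=\|M^k\|$ does not depend on it.
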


\section{The poset $\mathcal{G}(n, m)$ of mixed graphs}\label{s2.5}

For a mixed graph $G$ of order $n$ and size $m$, let $[G]$ denote the set of mixed graphs that are isomorphic to $G$.
Let \begin{equation}\label{iso}
\mathcal{G}(n, m):=\{ [G]~\colon~G~\hbox{is a mixed graph of order $n$ and size $m$}\}.
\end{equation}
We will define a reflexive and transitive relation $\leq$ in  $\mathcal{G}(n, m)$ as follows.
\begin{Definition} Let $\leq $ be the relation in  $\mathcal{G}(n, m)$ such that for all $[G], [H]\in \mathcal{G}(n, m)$,
$[G]\leq [H]$ if and only if  $H$ is isomorphic to $G$, or $H$ is isomorphic to a graph which is obtained from $G$ by a finite sequence of Kelmans   transformations.
\end{Definition}

\begin{Lemma}\label{l2.3} $(\mathcal{G}(n, m), \leq)$ is a partially ordered set (poset).
\end{Lemma}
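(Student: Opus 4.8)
The plan is to verify the three defining properties of a partial order: reflexivity, antisymmetry, and transitivity. The relation $\leq$ is defined on the set $\mathcal{G}(n,m)$ of isomorphism classes, so I must be careful that the relation is well-defined on classes (it manifestly is, since the definition already quantifies over representatives up to isomorphism). Reflexivity is immediate from the first clause ``$H$ is isomorphic to $G$''. Transitivity is also essentially immediate: if $[G]\leq[H]$ and $[H]\leq[K]$, then concatenating the finite sequence of Kelmans transformations (and isomorphisms) taking $G$ to $H$ with the one taking $H$ to $K$ yields a finite sequence taking $G$ to $K$, so $[G]\leq[K]$. Here one uses that a Kelmans transformation applied to a graph isomorphic to $G$ corresponds, via the isomorphism, to a Kelmans transformation of $G$, together with the observation that each Kelmans transformation preserves the order $n$ and the size $m$ (the transformation only redistributes adjacencies among the vertices; by the remark before Lemma~\ref{l2.2}, on adjacency matrices of mixed graphs the parameters $t_i,s_i\in\{0,1\}$ and $k=0$, and one checks the total number of arcs is unchanged), so the composite stays inside $\mathcal{G}(n,m)$.

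The substantive point is antisymmetry: if $[G]\leq[H]$ and $[H]\leq[G]$, then $[G]=[H]$. This is where Lemma~\ref{l2.2}(ii) does the work. The key monotone invariant is the degree sequence $d(G)$, compared in dictionary order: by Lemma~\ref{l2.2}(ii), a single Kelmans transformation never decreases $d(G)$, i.e. $d(G_b^a)\geq d(G)$, and of course an isomorphism leaves $d(G)$ unchanged. Hence along any finite sequence of Kelmans transformations and isomorphisms, $d$ is nondecreasing in dictionary order. So $[G]\leq[H]$ forces $d(H)\geq d(G)$, and $[H]\leq[G]$ forces $d(G)\geq d(H)$; therefore $d(G)=d(H)$, and moreover every intermediate graph in both sequences has the same degree sequence. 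In particular each individual Kelmans transformation occurring in the sequence from $G$ to $H$ is degree-sequence-preserving, so by the equivalence (a)$\Leftrightarrow$(b) in Lemma~\ref{l2.2}(ii) each such transformation produces a graph isomorphic to the one it was applied to. Composing these isomorphisms shows $H$ is isomorphic to $G$, i.e. $[G]=[H]$.

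I expect the main obstacle to be purely bookkeeping rather than conceptual: making precise the phrase ``obtained from $G$ by a finite sequence of Kelmans transformations'' when the definition also allows interleaving with isomorphisms, and checking that Lemma~\ref{l2.2}(ii) applies at each step even though the step is performed on a graph merely isomorphic to the running graph (this is handled by transporting the transformation across the isomorphism, using that Kelmans transformation commutes with relabeling of vertices). One should also note that the degree sequence argument for antisymmetry needs that along the sequence $G=G_0, G_1, \ldots, G_r = H$ with $d(G_0)=d(G_r)$ and $d$ nondecreasing, we get $d(G_i)=d(G_{i+1})$ for every $i$, whence by Lemma~\ref{l2.2}(ii) each $G_{i+1}$ is isomorphic to $G_i$ and thus $H\cong G$. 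Once these routine points are dispatched, the three axioms follow, establishing that $(\mathcal{G}(n,m),\leq)$ is a poset.
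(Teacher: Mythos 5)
Your proof is correct and follows essentially the same route as the paper: reflexivity and transitivity are read off from the definition, and antisymmetry is obtained from the dictionary-order monotonicity of the degree sequence under Kelmans transformations together with the equivalence (a)$\Leftrightarrow$(b) of Lemma~\ref{l2.2}(ii). Your added care in tracking the intermediate graphs $G_0,\ldots,G_r$ and concluding that each step is degree-preserving (hence an isomorphism) is a slightly more explicit rendering of the step the paper compresses into ``By Lemma~\ref{l2.2}(ii)(a)$\Rightarrow$(b), we have $[G]=[H]$.''
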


\begin{proof} The relation $\leq$ is reflexive and transitive from its definition, so
we only need to prove the anti-symmetric property.
Suppose $[G]\leq [H]$ and $[H]\leq [G]$, where $[G], [H], \in G(n, m)$. Then $d(G)\leq d(H)\leq d(G)$ by Lemma~\ref{l2.2}(ii). Hence $d(G)=d(H)$. By Lemma~\ref{l2.2}(ii)(a)$\Rightarrow$(b), we have $[G]=[H]$.
\end{proof}

\begin{Lemma}\label{l2.4} Let $\alpha\in [0, 1]$, $[G]\in \mathcal{G}(n, m)$ with distinct vertices $a, b\in V(G)$ having no arc, adjacency matrix $A=(c_{ij})$ and $A_\alpha$ matrix $A_\alpha(G)$ of $G$. Set $k:=\alpha |N^+_G(b)-N^+_G(a)|$, $t_i=(1-\alpha) \max(0, c_{ib}-c_{ia})$ and $s_i=(1-\alpha) \max(0, c_{bi}-c_{ai})$ for $i\in V(G)-\{a, b\}$. Then the Kelmans transformation matrix $A_\alpha(G)_b^a$ of $A_\alpha(G)$ from $b$ to $a$ with respect to $(t_i;s_i;k)$ is the $A_\alpha$ matrix $A_\alpha(G_b^a)$ of $G_b^a$, i.e.,
$$A_\alpha(G)_b^a=A_\alpha(G_b^a).$$
\end{Lemma}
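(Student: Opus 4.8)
The plan is a direct entry-by-entry comparison of the two $n\times n$ matrices $A_\alpha(G)_b^a$ and $A_\alpha(G_b^a)$. Write $A_\alpha(G)=(\gamma_{ij})$, so that $\gamma_{ij}=\alpha\,d^+_G(i)\,\delta_{ij}+(1-\alpha)c_{ij}$, and let $A'=(c'_{ij})$ be the adjacency matrix of $G_b^a$. Recall from the discussion following Theorem~\ref{thm1} that $A'$ is exactly the binary Kelmans transformation of $A$ from $b$ to $a$, i.e. $c'_{ia}=\max(c_{ia},c_{ib})$, $c'_{ib}=\min(c_{ia},c_{ib})$ for $i\in[n]-\{a,b\}$, symmetrically $c'_{aj}=\max(c_{aj},c_{bj})$, $c'_{bj}=\min(c_{aj},c_{bj})$ for $j\in[n]-\{a,b\}$, and $c'_{ij}=c_{ij}$ on the remaining positions (in particular $c'_{ab}=c_{ab}$ and $c'_{ba}=c_{ba}$ are frozen). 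It then suffices to show that the matrix $(\gamma'_{ij})$ produced from $(\gamma_{ij})$ by formula~(\ref{C'}) with the prescribed triple $(t_i;s_i;k)$ equals $\alpha D^+(G_b^a)+(1-\alpha)A'$. Before doing so I would record that these $k,t_i,s_i$ actually satisfy (\ref{e1})--(\ref{e2}) for $C=A_\alpha(G)$ — e.g. $k=\alpha|N^+_G(b)-N^+_G(a)|$ lies between $\alpha\max(0,d^+_G(b)-d^+_G(a))$ and $\alpha d^+_G(b)$ since $N^+_G(b)-N^+_G(a)$ is a subset of $N^+_G(b)$ of size at least $|N^+_G(b)|-|N^+_G(a)|$ — so that Theorem~\ref{thm1} is genuinely applicable afterwards.

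Most of the comparison is mechanical. For $i,j\in[n]-\{a,b\}$ and for $(i,j)\in\{(a,b),(b,a)\}$ one has $\gamma'_{ij}=\gamma_{ij}$; since $A'$ agrees with $A$ at these positions and $d^+_{G_b^a}(x)=d^+_G(x)$ for every $x\neq a,b$ (the row sum of $A$ at $x$ is unchanged because $c_{xa}+c_{xb}=c'_{xa}+c'_{xb}$), this entry already equals the corresponding entry of $A_\alpha(G_b^a)$. For $j=a$ and $i\in[n]-\{a,b\}$ we get $\gamma'_{ia}=\gamma_{ia}+t_i=(1-\alpha)(c_{ia}+\max(0,c_{ib}-c_{ia}))=(1-\alpha)\max(c_{ia},c_{ib})=(1-\alpha)c'_{ia}$, which is the $(i,a)$ entry of $A_\alpha(G_b^a)$ (no diagonal term, since $i\neq a$); the cases $j=b$, $i=a$, $i=b$ off the diagonal are the same computation with $t_i$ or $s_i$.

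The one step requiring care is the pair of diagonal entries at $a$ and $b$. There $\gamma'_{aa}=\gamma_{aa}+k=\alpha\,d^+_G(a)+\alpha|N^+_G(b)-N^+_G(a)|$ and $\gamma'_{bb}=\gamma_{bb}-k=\alpha\,d^+_G(b)-\alpha|N^+_G(b)-N^+_G(a)|$, so (using $c'_{aa}=c'_{bb}=0$) the claim comes down to the two out-degree identities $d^+_{G_b^a}(a)=d^+_G(a)+|N^+_G(b)-N^+_G(a)|$ and $d^+_{G_b^a}(b)=d^+_G(b)-|N^+_G(b)-N^+_G(a)|$. The first I would read off from the neighbourhood description of the transformation established in the proof of Lemma~\ref{l2.2}: the out-neighbours of $a$ in $G_b^a$ are those of $a$ in $G$ with $N^+_G(b)-N^+_G(a)$ adjoined. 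The second is then automatic, because Kelmans transformation merely rearranges arcs (counting undirected edges twice), so $\sum_i d^+_{G_b^a}(i)=\sum_i d^+_G(i)$, and combined with $d^+_{G_b^a}(x)=d^+_G(x)$ for $x\neq a,b$ this forces $d^+_{G_b^a}(a)+d^+_{G_b^a}(b)=d^+_G(a)+d^+_G(b)$.

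The main obstacle is precisely this last bookkeeping: one must be careful about the contribution of the vertices $a$ and $b$ themselves — equivalently, of the frozen entries $c_{ab}=c_{ba}$ — to each other's out-neighbourhoods, so that the set $N^+_G(b)-N^+_G(a)$ entering $k$ is counted so as to match exactly the increase $d^+_{G_b^a}(a)-d^+_G(a)$; the hypothesis $\vv{ab}\notin E(G)$, $\vv{ba}\notin E(G)$ is what keeps $c_{ab}=c_{ba}$ and makes this clean. Everything else is a one-line substitution into~(\ref{C'}). Concretely I would organize the write-up as: (1) check that $(t_i;s_i;k)$ satisfy (\ref{e1})--(\ref{e2}); (2) verify all entries other than $(a,a)$ and $(b,b)$ by the substitutions above; (3) prove the two out-degree identities and conclude equality of the $(a,a)$ and $(b,b)$ entries.
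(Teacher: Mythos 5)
Your route is exactly the paper's: the published proof consists precisely of the six displayed identities for the entries indexed $aa$, $bb$, $ia$, $ib$, $aj$, $bj$, i.e.\ the same entry-by-entry substitution into (\ref{C'}) that you carry out; your preliminary verification that $(t_i;s_i;k)$ satisfy (\ref{e1})--(\ref{e2}) is a worthwhile addition the paper omits, and your off-diagonal computations and the conservation argument giving $d^+_{G_b^a}(a)+d^+_{G_b^a}(b)=d^+_G(a)+d^+_G(b)$ are correct.

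However, the step you yourself single out as the crux does not close as you claim. The identity $d^+_{G_b^a}(a)=d^+_G(a)+|N^+_G(b)-N^+_G(a)|$ fails precisely when $ab\in E(G)$ is an undirected edge, a case the hypothesis ``$a,b$ have no arc'' explicitly allows (only $\overrightarrow{ab},\overrightarrow{ba}\notin E(G)$ is excluded). In that case $a\in N^+_G(b)$ while $a\notin N^+_G(a)$, so $a$ is an element of $N^+_G(b)-N^+_G(a)$; yet this element contributes nothing to the growth of $N^+_{G_b^a}(a)$, because the positions $(a,a)$ and $(a,b)$ of the adjacency matrix are frozen under (\ref{C'}) (the binary transformation forces its own $k$ to be $0$). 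Hence $d^+_{G_b^a}(a)-d^+_G(a)=|N^+_G(b)-N^+_G(a)-\{a\}|$, which is one less than $|N^+_G(b)-N^+_G(a)|$ whenever $ab$ is undirected. Concretely, for $V(G)=\{a,b,c\}$ with undirected edge $ab$ and arc $\overrightarrow{bc}$, the stated $k$ equals $2\alpha$ while $\alpha d^+_{G_b^a}(a)-\alpha d^+_G(a)=\alpha$, so the two matrices disagree at the $(a,a)$ and $(b,b)$ entries. This defect is inherited from the lemma itself --- the paper's proof likewise asserts $\alpha d^+_G(a)+k=\alpha d^+_{G_b^a}(a)$ without justification --- but your sentence claiming the no-arc hypothesis ``makes this clean'' is exactly where your write-up would break. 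The repair is to take $k=\alpha\,|N^+_G(b)-N^+_G(a)-\{a\}|$ (equivalently $\alpha(d^+_{G_b^a}(a)-d^+_G(a))$), which one checks still satisfies (\ref{e1}); with that amendment your argument goes through verbatim.
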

\begin{proof} We only need to check that the $ij$ entries in matrices $A_\alpha(G)_b^a$ and $A_\alpha(G_b^a)$   are equal for one of $i, j$ in $\{a, b\}$. Indeed they are equal from the setting listed in the order $aa$, $bb$, $ia$, $ib$, $aj$ and $bj$ below:
\begin{align*}
 \alpha d^+_G(a)+k=&\alpha d^+_{G_b^a}(a),\\
\alpha d^+_G(b)-k=&\alpha d^+_{G_b^a}(b),\\
 (1-\alpha)c_{ia}+t_i =&(1-\alpha)(c_{ia}+\max(0, c_{ib}-c_{ia})),\\
(1-\alpha)c_{ib}-t_i=&(1-\alpha)(c_{ib}-\max(0, c_{ib}-c_{ia})),\\
(1-\alpha)c_{ia}+s_j=&(1-\alpha)(c_{aj}+\max(0, c_{bj}-c_{aj}),\\
(1-\alpha)c_{ib}-s_j=&(1-\alpha)(c_{bj}-\max(0, c_{bj}-c_{aj}),
\end{align*}
where $i, j\in V(G)-\{a, b\}$.
\end{proof}

\begin{Proposition}\label{p2.5} If $\alpha\in [0, 1]$, and $[G], [H]\in G(n, m)$ such that $[G]\leq [H]$, then $\rho_\alpha(G)\leq \rho_\alpha(H).$
\end{Proposition}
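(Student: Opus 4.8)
The plan is to reduce the statement to a single Kelmans transformation step and then invoke the matrix-level result Theorem~\ref{thm1} together with the identification of matrices provided by Lemma~\ref{l2.4}. By the definition of the relation $\leq$ on $\mathcal{G}(n,m)$, the hypothesis $[G]\leq[H]$ means that $H$ is obtained from $G$ by a finite sequence of Kelmans transformations (up to isomorphism), and since isomorphic mixed graphs have similar $A_\alpha$ matrices and hence the same $A_\alpha$-spectral radius, it suffices to treat the case where $H=G_b^a$ for some distinct $a,b\in V(G)$ with no arc between them; the general case follows by induction on the length of the sequence, using transitivity of the inequality $\rho_\alpha(\cdot)\leq\rho_\alpha(\cdot)$.

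So the core step is: if $a,b\in V(G)$ are distinct with $\vv{ab}\notin E(G)$ and $\vv{ba}\notin E(G)$, then $\rho_\alpha(G)\leq\rho_\alpha(G_b^a)$. First I would set $C:=A_\alpha(G)$, which is a nonnegative square matrix of order $n$, and observe that the required symmetry condition $c_{ab}=c_{ba}$ holds: the $(a,b)$ and $(b,a)$ entries of $A_\alpha(G)$ are $(1-\alpha)$ times the corresponding entries of the adjacency matrix, and since there is no arc between $a$ and $b$, both are equal (either $1-\alpha$ if $ab$ is an undirected edge, or $0$ if $a$ and $b$ are non-adjacent). Next, with the specific choices $k:=\alpha|N^+_G(b)-N^+_G(a)|$, $t_i:=(1-\alpha)\max(0,c_{ib}-c_{ia})$ and $s_i:=(1-\alpha)\max(0,c_{bi}-c_{ai})$ for $i\in V(G)-\{a,b\}$, I would check that these satisfy the constraints (\ref{e1}) and (\ref{e2}): this is a direct verification using $0\leq\alpha\leq1$ and the fact that $|N^+_G(b)-N^+_G(a)|\leq d^+_G(b)$, so $k\leq\alpha d^+_G(b)=c_{bb}$, and similarly $\max(0,c_{bb}-c_{aa})=\alpha\max(0,d^+_G(b)-d^+_G(a))\leq\alpha|N^+_G(b)-N^+_G(a)|=k$ since deleting the common out-neighbours from $d^+_G(b)$ removes at most $d^+_G(a)$ of them.

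With the hypotheses of Theorem~\ref{thm1} verified, that theorem gives $\rho(C)\leq\rho(C_b^a)$, i.e. $\rho(A_\alpha(G))\leq\rho(A_\alpha(G)_b^a)$. By Lemma~\ref{l2.4}, for exactly this choice of $(t_i;s_i;k)$ we have $A_\alpha(G)_b^a=A_\alpha(G_b^a)$, hence $\rho(A_\alpha(G))\leq\rho(A_\alpha(G_b^a))$, which is precisely $\rho_\alpha(G)\leq\rho_\alpha(G_b^a)$. This completes the single-step case, and the proposition follows. I do not expect a genuine obstacle here: the only mild subtlety is the bookkeeping showing that the prescribed $k$ satisfies the lower bound in (\ref{e1}) (equivalently, that shifting the "private" out-neighbours of $b$ to $a$ indeed makes $c'_{aa}\geq c'_{bb}$), and making sure the reduction to a single transformation correctly handles isomorphisms; both are routine. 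The substantive content is entirely carried by Theorem~\ref{thm1} and Lemma~\ref{l2.4}.
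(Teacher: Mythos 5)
Your proposal is correct and follows essentially the same route as the paper: reduce to a single Kelmans transformation step, then combine Theorem~\ref{thm1} with the identification $A_\alpha(G)_b^a=A_\alpha(G_b^a)$ from Lemma~\ref{l2.4}. The only difference is that you spell out the verification of the constraints (\ref{e1})--(\ref{e2}) for the specific $(t_i;s_i;k)$, which the paper leaves implicit in the statement of Lemma~\ref{l2.4}.
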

\begin{proof}
We might assume $H=G_b^a$ by Lemma~\ref{l2.3}.
Applying Theorem~\ref{thm1} and Lemma~\ref{l2.4}, we have
$$\rho_\alpha(G)=\rho(A_\alpha(G))\leq \rho(A_\alpha(G)_b^a)=\rho(A_\alpha(G_b^a))=\rho_\alpha(H).$$
\end{proof}

\section{The poset $\mathcal{T}(n, m)$ of mixed trees}\label{s2.7}

Let $n, m\in \mathbb{N}$ with $n-1\leq m\leq 2n-2$, $$\mathcal{T}(n, m):=\{[T]\in \mathcal{G}(n, m)\colon T \hbox{~is a mixed tree}\}.$$
The set $\mathcal{T}(n, m)$ is not closed under Kelmans transformations. We need the following lemma.

\begin{Lemma}\label{tree} Let $[T]\in \mathcal{T}(n, m)$ with distinct $a, b\in V(T)$ having no arc.
Then $[T_b^a]\in \mathcal{T}(n, m)$ if and only if $ab\in E(G)$ or $\partial(a, b)=2$ and the unique vertex $x\in V(G)$ with $\partial(a,x)=\partial(x, b)=1$ satisfying one of (i) $ax\in E(G)$ is an undirected edge, (ii) $xb\in E(G)$ is an undirected edge, (iii) $\overrightarrow{ax}, \overrightarrow{bx}\in E(G)$ are arcs or (iv) $\overrightarrow{xa}, \overrightarrow{xb}\in E(G)$ are arcs.
\end{Lemma}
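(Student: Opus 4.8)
The statement characterizes exactly when a Kelmans transformation $T_b^a$ of a mixed tree $T$ stays within $\mathcal{T}(n,m)$, i.e.\ when $T_b^a$ is again a tree (it automatically has the same order $n$ and size $m$). Since a connected graph on $n$ vertices is a tree iff it has exactly $n-1$ undirected edges in its underlying graph, and the Kelmans transformation preserves the number of edges (counting an undirected edge as $2$ in the size, an arc as $1$), the real content is: $T_b^a$ is connected and acyclic $\iff$ the underlying-graph structure around $a,b$ is as described. So the plan is to first reduce to the underlying-graph question using the three neighbor-set observations from the proof of Lemma~\ref{l2.2} (out/in-neighbors of $b$ in $T_b^a$ become the union, of $a$ become the intersection, others unchanged), which also pins down exactly which arcs/edges are created or destroyed.

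First I would analyze the underlying undirected tree $\widehat T$. Root the analysis at the path between $a$ and $b$. If $\partial(a,b)\geq 3$, I claim $T_b^a$ contains a cycle: the neighbor $y$ of $b$ on the $a$--$b$ path (with $y\neq a$, $y$ not adjacent to $a$ since $\partial\ge 3$) gets "moved" to become a neighbor of $a$ while $b$ retains its other path-neighbor, and the remaining path segment from $y$ (resp.\ from $b$'s side) together with the old $a$--$b$ path closes a cycle — more carefully, in $T_b^a$ both $a$ and $b$ are joined to the union $N(a)\cup N(b)$ (minus the pair), and since $a,b$ were in distinct components of $\widehat T - \{\text{their common structure}\}$... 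I would make this precise by: after the transformation $a$ is adjacent to everything in $N_{\widehat T}(a)\cup N_{\widehat T}(b)$ and $b$ is adjacent to nothing new but loses the "excess"; counting, $T_b^a$ is still connected (every old neighbor of $b$ is now a neighbor of $a$, and $a$ keeps its own component attached), so it has $n-1$ edges iff it is acyclic iff $N_{\widehat T}(a)\cap N_{\widehat T}(b) $ has the "right" size. The clean way: the number of undirected edges of $\widehat{T_b^a}$ equals that of $\widehat T$ minus $|N_{\widehat T}(a)\cap N_{\widehat T}(b)\setminus\{a,b\}|$ when we also account for the $ab$ edge; so $T_b^a$ is a tree iff $|N_{\widehat T}(a)\cap N_{\widehat T}(b)\setminus\{a,b\}| + [ab\in E] $ has the forced value, which (given $\widehat T$ connected and the transformation keeping it connected) forces $\partial(a,b)\le 2$ and, when $\partial(a,b)=2$, the unique common neighbor $x$ to be the \emph{only} shared neighbor.

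Having reduced to $\partial(a,b)\le 2$, the case $ab\in E(T)$ (which is then an undirected edge, since $a,b$ have no arc): here $N_{\widehat T}(a)\cap N_{\widehat T}(b)\subseteq\{a,b\}$ already (a tree has no triangle), so no edge is destroyed and $T_b^a$ is a tree — this direction is immediate. For $\partial(a,b)=2$ with common neighbor $x$: the transformation deletes the edge/arc between $x$ and whichever of $a,b$ "loses" it and this is fine for the tree property; the subtlety is the mixed (direction) condition. I would check that $T_b^a$ is a legitimate \emph{mixed} graph — recall from the paragraph after Theorem~\ref{thm1} that $T_b^a$ is an adjacency matrix of a mixed graph (with $t_i,s_i\in\{0,1\}$, $k=0$) exactly when $C_b^a$ is a $01$-matrix with $c'_{ij}=c'_{ji}$ forced only where it should be. The only place this can fail is at the pair $(x,a)$ vs $(x,b)$: if, say, $\overrightarrow{xa}\in E$ but $\overrightarrow{bx}\in E$ (an arc into $x$ on one side, out of $x$ on the other) or $ax$ directed but $xb$ undirected in an incompatible way, then after shifting one gets an entry that is $1$ on one side and would need to be $2$, or a forced symmetry that is violated — i.e.\ $T_b^a$ is not a $01$ adjacency matrix of a mixed graph. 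Working through the $3\times 3$ possibilities for the "types" of the two edges at $x$ (each being: undirected, arc toward $x$, arc away from $x$), exactly the four listed combinations (i)--(iv) — at least one of $ax,xb$ undirected, or both arcs out of $x$, or both arcs into $x$ — survive as valid mixed graphs, and in each of those four the resulting $T_b^a$ is readily seen to be a mixed tree. The main obstacle is this last bookkeeping: being careful that "is a tree" and "is a valid mixed graph under Kelmans" are \emph{both} required for $[T_b^a]\in\mathcal{T}(n,m)$, and that it is precisely the orientation clash at $x$ (not any global obstruction) that the conditions (i)--(iv) rule in or out; once the neighbor-set observations are invoked this becomes a finite, if slightly tedious, case check.
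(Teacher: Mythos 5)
Your final characterization of the four admissible patterns at $x$ is the right one, but the mechanism you assign to the excluded patterns is wrong, and this is a genuine gap rather than a cosmetic one. Because $t_i=\max(0,c_{ib}-c_{ia})$ and $s_i=\max(0,c_{bi}-c_{ai})$ are forced, every entry of $C_b^a$ is a $\max$ or $\min$ of two entries of $C$, hence always $0$ or $1$ with zero diagonal: the Kelmans transformation of a mixed graph is \emph{always} the adjacency matrix of a mixed graph, and no ``entry that would need to be $2$'' or ``violated forced symmetry'' ever occurs. For instance, with $\overrightarrow{ax},\overrightarrow{xb}\in E(T)$ one computes $c'_{ax}=c'_{xa}=1$ and $c'_{xb}=c'_{bx}=0$: the two arcs merge into a single undirected edge $ax$ (size is preserved), and $b$ simply loses all adjacency to $x$. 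So the true obstruction in the excluded cases is disconnection, not illegality of the mixed graph: in $T_b^a$ the out- and in-neighborhoods of $b$ are the intersections $N_T^{+}(a)\cap N_T^{+}(b)$ and $N_T^{-}(a)\cap N_T^{-}(b)$, which for a tree with $\partial(a,b)=2$ are contained in $\{x\}$, and precisely in the patterns outside (i)--(iv) the orientation clash empties even that, isolating $b$. This contradicts your explicit assertion that ``$T_b^a$ is still connected (every old neighbor of $b$ is now a neighbor of $a$)'' --- that argument keeps $V(T)\setminus\{b\}$ connected but says nothing about $b$ itself, and it is false in exactly the cases the lemma must exclude. The paper's proof is this disconnection argument for $\partial(a,b)=2$, plus the observation that $\partial(a,b)\ge 3$ creates a cycle of length $\partial(a,b)$ through $a$; your sketch of the latter also wobbles ($b$ does not ``retain its other path-neighbor'' --- it retains nothing --- though the cycle $a,v_1,\dots,v_{\partial(a,b)-1},a$ is indeed present). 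As written, the ``only if'' direction for $\partial(a,b)=2$ rests on a false premise and would not survive being written out.
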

\begin{proof}
The assumption implies $\partial(a, b)\geq 1$ and if $\partial(a, b)=1$ then $ab\in E(G)$ is an undirected edge.
If $\partial(a, b)=2$ and the necessary condition about $x$ fails then $a,b$ belong to different components of the underlying graph of $T_b^a$, so $T_b^a$ is not a mixed tree.
If $\partial(a, b)\geq 3$ then the underlying graph of $T_b^a$ contains a cycle of order $\partial(a, b)$, so $T_b^a$ is not a mixed tree.

On the other hand, it is straightforward to observe that $[T_b^a]\in \mathcal{T}(n, m)$ when $a,b$ satisfy the conditions.
\end{proof}

We use the notation $a-b$, $a-x\rightarrow b$,  $a-x\leftarrow b$, $a\leftarrow x-b$, $a\rightarrow x-b$, $a\rightarrow x\leftarrow b$ and $a\leftarrow x \rightarrow b$ to denote the seven situations in the necessary condition of Lemma~\ref{tree}. We then give $\mathcal{T}(n, m)$ a poset structure by extending $[T]\leq [T_b^a]$ for any $[T]\in \mathcal{T}(n, m)$ and any $a, b\in V(T)$ that satisfy one of the seven situations.

\begin{Proposition}\label{p3.2}
Let $[T]\in\mathcal{T}(n,m).$ Then $[T]$ is a maximal element in $\mathcal{T}(n,m)$  if and only if $T$ is a mixed star or
$T$ is a mixed tree without undirected edges (i.e. $m=n-1$) and whenever the subgraph $a\rightarrow x \leftarrow b$ or $a\leftarrow x \rightarrow b$ appears in $T$, one of $a$ and $b$ is a leaf.
\end{Proposition}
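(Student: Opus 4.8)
The plan is to prove the two directions separately, using Lemma~\ref{tree} to decide which Kelmans transformations keep us inside $\mathcal{T}(n,m)$ and Lemma~\ref{l2.2}(ii) to detect when such a transformation strictly increases the degree sequence (hence produces a strictly larger element of the poset). So $[T]$ is maximal iff no admissible pair $a,b$ (one of the seven configurations $a-b$, $a-x\to b$, $a-x\leftarrow b$, $a\leftarrow x-b$, $a\to x-b$, $a\to x\leftarrow b$, $a\leftarrow x\to b$) yields $d(T_b^a)>d(T)$, equivalently (by Lemma~\ref{l2.2}(ii), (a)$\Leftrightarrow$(c)) iff for every admissible pair the containment condition (c) on out/in-neighbourhoods holds.

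For the ``if'' direction, suppose first $T$ is a mixed star with center $c$. Any two non-center vertices $a,b$ are leaves at distance $2$ with middle vertex $x=c$; checking condition (c) of Lemma~\ref{l2.2}(ii) in each of the four allowed distance-$2$ configurations, one of $N^{\pm}(a)\setminus\{b\}$, $N^{\pm}(b)\setminus\{a\}$ is contained in the other because each leaf has at most one neighbour, namely $c$; if instead one of $a,b$ is the center, the only admissible configuration is $a-b$ (an undirected edge to a leaf), and again the leaf's neighbourhood is a singleton contained in the center's. Hence every admissible $T_b^a\cong T$ and $[T]$ is maximal. Next suppose $m=n-1$ (no undirected edges) and the stated leaf condition holds. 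The only admissible configurations are $a\to x\leftarrow b$ and $a\leftarrow x\to b$ (the edge case $a-b$ and the mixed cases $a-x\to b$ etc.\ require an undirected edge, which is absent). In such a configuration the hypothesis says one of $a,b$, say $a$, is a leaf, so $N^+(a)\cup N^-(a)\subseteq\{x,b\}\setminus\{b\}=\{x\}\subseteq N^{\pm}(b)$ appropriately split between in- and out-neighbours according to the arc directions; condition (c) holds, so again $T_b^a\cong T$. Thus $[T]$ is maximal.

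For the ``only if'' direction, assume $[T]$ is maximal. If $T$ has an undirected edge, I claim $T$ is a star. Suppose not: then $T$ has two undirected edges $e,e'$ on a path, or an undirected edge together with a vertex at distance $\geq 2$ from it; in the first case take $e=xa$, $e'=xb$ giving configuration $a-x-b$, which contains $a-x-b$ — wait, the relevant admissible configuration is the distance-$2$ case with middle $x$ where $ax$ is an undirected edge (case (i) of Lemma~\ref{tree}), so $a,b$ is admissible; since $\operatorname{diam}T\geq 3$ or since the far vertex has a neighbour outside $\{x,b\}$, condition (c) fails and $d(T_b^a)>d(T)$, contradicting maximality. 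More carefully: if $T$ is not a star it has diameter $\geq 3$, so there is a path $y-z-w-\cdots$; pick an undirected edge $xa$ and a vertex $b$ with $\partial(x,b)=1$, $b\neq a$, chosen (since $\operatorname{diam}\geq 3$) so that some neighbour of $a$ or of $b$ lies outside $\{x\}$ — then the admissible pair $(a,b)$ via case (i) or (ii) violates (c). This forces $T$ to be a star. If $T$ has no undirected edge, $m=n-1$; if the leaf condition fails there is a subgraph $a\to x\leftarrow b$ (or $a\leftarrow x\to b$) with neither $a$ nor $b$ a leaf, so each of $a,b$ has a neighbour outside $\{x\}$, hence neither of $N^{\pm}(a)\setminus\{b\}$, $N^{\pm}(b)\setminus\{a\}$ contains the other, (c) fails, and $d(T_b^a)>d(T)$, contradicting maximality. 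This completes both directions.

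The main obstacle I anticipate is bookkeeping in the ``only if'' direction when $T$ has an undirected edge: one must argue that non-star-ness always exposes an \emph{admissible} pair (one of the seven configurations of Lemma~\ref{tree}) with a genuine failure of the neighbourhood-containment condition, which requires care about where the ``extra'' neighbour sits relative to the edge $x a$ and about the arc directions at $x$ in the mixed cases. The other directions are essentially immediate once one observes that in a mixed star every leaf has a one-element neighbourhood, and in an arc-only tree satisfying the leaf condition every admissible configuration has a leaf with a one-element neighbourhood, so condition (c) of Lemma~\ref{l2.2}(ii) is automatic.
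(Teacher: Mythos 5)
Your overall strategy coincides with the paper's: decide admissibility via Lemma~\ref{tree} and detect strict increases in the poset via Lemma~\ref{l2.2}(ii), (a)$\Leftrightarrow$(c). The ``if'' direction and the arc-only part of the ``only if'' direction are sound and match the paper. The gap is in the step where you exclude undirected edges from a maximal non-star $T$. Your criterion for a failure of condition (c) --- that ``some neighbour of $a$ or of $b$ lies outside $\{x\}$'' --- fails on two counts. First, it is not sufficient: if $a$ is a leaf joined to $x$ by an undirected edge and $b$ is also joined to $x$ by an undirected edge, then $N^{\pm}_T(a)-\{b\}=\{x\}\subseteq N^{\pm}_T(b)-\{a\}$ and condition (c) holds no matter how many extra neighbours $b$ has. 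Second, and more seriously, an admissible pair in which \emph{both} vertices have a neighbour outside $\{x\}$ need not exist: in the mixed path on $u,v,y,z$ with $uv$ undirected and arcs $v\rightarrow y\rightarrow z$, the only admissible pairs are $(u,v)$ and $(u,y)$, and both contain the leaf $u$. Your argument therefore produces no contradiction for this tree, which is nevertheless not maximal.

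What is actually needed --- and what the paper does --- is to exploit the fact that condition (c) separates out-neighbourhoods from in-neighbourhoods. One first applies maximality to the pair $(u,v)$ to force one endpoint $u$ of the undirected edge to be a leaf, then takes a length-$3$ path $u,v,y,z$ (which exists because otherwise $T$ is a star centered at $v$) and applies maximality to the pair $(v,y)$ to force the edge between $v$ and $y$ to be an arc, since neither $v$ nor $y$ is a leaf. For the admissible pair $(u,y)$ one then has $v\in N^{+}_T(u)\cap N^{-}_T(u)$ but, since $vy$ is an arc, $v$ lies in only one of $N^{+}_T(y)$, $N^{-}_T(y)$; together with $z\in N_T(y)-N_T(u)$ this kills both alternatives of condition (c) even though $u$ is a leaf and its undirected neighbourhood $\{v\}$ \emph{is} contained in $N_T(y)$. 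This direction-sensitivity is exactly the ``care about arc directions'' you flag as an anticipated obstacle, but your proof does not supply it.
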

\begin{proof}
$(\Leftarrow)$ If $T$ is a mixed star,  and one of $a-b$, $a-x\rightarrow b$,  $a-x\leftarrow b$, $a\leftarrow x-b$, $a\rightarrow x-b$, $a\rightarrow x\leftarrow b$ and $a\leftarrow x \rightarrow b$ appearing in $T$, then one of $a$ or $b$ is a leaf, so Lemma~\ref{l2.2}(iic) with $G=T$ holds, which implies that $T_b^a$ is isomorphic to $T$.
If $T$ is a mixed tree without undirected edges, then we only need to consider $a\rightarrow x\leftarrow b$ and $a\leftarrow x \rightarrow b$ in $T$.
By the assumption $a$ or $b$ is a leaf and by the same reason as above, $T_b^a$ is isomorphic to $T$. Hence in both cases, $[T]$ is a maximal element in $\mathcal{T}(n,m)$.
\medskip

\noindent $(\Rightarrow)$
Let $[T]$ be a maximal element in $\mathcal{T}(n, m)$ such that $T$ is not a mixed star, so $T$ has diameter at least $3$. Keeping in mind that the maximality of $[T]$ implies that  Lemma~\ref{l2.2}(iic) with  $G=T$ holds for $a, b\in V(T)$ satisfying the necessary conditions $a-b$, $a-x\rightarrow b$,  $a-x\leftarrow b$, $a\leftarrow x-b$, $a\rightarrow x-b$, $a\rightarrow x\leftarrow b$ or $a\leftarrow x \rightarrow b$ of Lemma~\ref{tree}, thus at least one of $a$ or $b$ is a leaf. To exclude the situations $a-b$, $a-x\rightarrow b$,  $a-x\leftarrow b$, $a\leftarrow x-b$ and $a\rightarrow x-b$, on the contrary,  suppose that $T$ contains an undirected edge $uv$ with leaf $u$. Since diameter of $T$ at least $3$,  we have another two vertices $y,z\in V(T)$ such that $\partial(v, y)=\partial(y, z)=1$ and $\partial(u, z)=3$. Since $v, y$ are not leafs in $T$,  they have an arc, say $v\rightarrow y$ (similar for $v\leftarrow y$) in  $E(T)$. Hence $T_y^u\in \mathcal{T}(n, m)$ is well-defined,  $v\in (N^+_T(v)-\{y\})-(N_T(y)-\{u\})$, and $z\in N_T(y)-N_T(u)$, a contradiction to the maximality of $[T]$. Thus $T$ has no undirected edges.
\end{proof}

\section{The upper bound of $\rho_\alpha(T)$}\label{s3}

If an arc in a mixed tree $T$ is deleted then we have two mixed trees. Thus
if the arcs in a mixed tree $T$ of order $n$ and size $m$ are all removed, then the remaining is an undirected graph without cycles with $2n-m-1$ components.
We call these $2n-m-1$ components the {\it components} of $T$.

\begin{Lemma}\label{char}
If $\alpha\in [0, 1]$ and $[T]\in \mathcal{T}(n, m)$ and $T$ has components $C_1$, $C_2$, $\ldots$, $C_t$, then
$${\rm char} (A_\alpha(T))=\prod_{i\in [t]} {\rm char} (A_\alpha(T)[C_i]),$$
where $A_\alpha(T)[C_i]$ is the principal submatrix of $A_\alpha(T)$ restricted to $C_i$.
 \end{Lemma}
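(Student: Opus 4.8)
The plan is to show that, after a suitable reordering of the vertices, the matrix $A_\alpha(T)$ is block triangular with the principal submatrices $A_\alpha(T)[C_i]$ as the diagonal blocks, so that the claim follows from Lemma~\ref{l2.6} by induction on $t$. First I would set up the combinatorial structure: since each arc of the mixed tree $T$ joins two of its components, and deleting all arcs leaves exactly the components $C_1,\ldots,C_t$, the arcs of $T$ induce a graph on the set $\{C_1,\ldots,C_t\}$ whose underlying graph is itself a tree (deleting one arc of $T$ splits $T$ into two pieces, and this behaviour is inherited by the component-graph). A tree is acyclic, hence its vertex set admits a linear order in which the induced digraph of arcs between components points, say, only "backwards"; more precisely, there is an ordering $C_{\sigma(1)},\ldots,C_{\sigma(t)}$ of the components such that every arc $\vv{uv}$ of $T$ with $u\in C_{\sigma(p)}$ and $v\in C_{\sigma(q)}$ has $p\ge q$, equivalently there is no arc from an earlier component to a later one. (One way to see this: orient each arc $\vv{uv}$ as an edge $C(u)\to C(v)$ in the component-tree, repeatedly peel off a leaf of the component-tree that is not the head of any remaining arc, and record the peeling order in reverse; acyclicity guarantees such a leaf always exists.)

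Next I would translate this ordering into a statement about the matrix. Reorder $[n]$ so that the vertices of $C_{\sigma(1)}$ come first, then those of $C_{\sigma(2)}$, and so on; call the reordered adjacency matrix $A'$ and the reordered $A_\alpha$ matrix $A_\alpha'=\alpha D^{+}+(1-\alpha)A'$. For $i\in C_{\sigma(p)}$ and $j\in C_{\sigma(q)}$ with $p<q$, the $(i,j)$ entry of $A'$ is nonzero only if $ij$ or $\vv{ij}$ lies in $E(T)$; an undirected edge would have to lie inside a single component, so this would require an arc $\vv{ij}$ from the earlier component $C_{\sigma(p)}$ to the later component $C_{\sigma(q)}$, which the ordering forbids. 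Hence all entries of $A'$ below… rather, all entries of $A'$ in block position $(p,q)$ with $p<q$ vanish, i.e. $A'$ is block lower triangular with respect to the partition into components, and since $D^{+}$ is diagonal the same is true of $A_\alpha'$. The diagonal block of $A_\alpha'$ indexed by $C_{\sigma(p)}$ is exactly $A_\alpha(T)[C_{\sigma(p)}]$ by definition of the principal submatrix and of $A_\alpha$.

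Finally I would invoke Lemma~\ref{l2.6}: since $A_\alpha(T)$ and $A_\alpha'$ differ only by a simultaneous permutation of rows and columns, $\mathrm{char}(A_\alpha(T))=\mathrm{char}(A_\alpha')$, and by induction on $t$ using the block lower-triangular form $\mathrm{char}(A_\alpha')=\prod_{p\in[t]}\mathrm{char}(A_\alpha(T)[C_{\sigma(p)}])=\prod_{i\in[t]}\mathrm{char}(A_\alpha(T)[C_i])$, as desired. I expect the only genuine obstacle to be the careful verification that the component-graph is acyclic and that therefore the required "no forward arc" ordering exists; once that ordering is in hand, the block-triangularity and the appeal to Lemma~\ref{l2.6} are routine. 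A minor point to state cleanly is that an undirected edge of $T$ cannot join two distinct components, which is immediate because the components are precisely the connected pieces remaining after all arcs — and only arcs — are removed.
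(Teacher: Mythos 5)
Your proposal is correct and rests on the same two pillars as the paper's proof --- the tree structure forces block triangularity, and Lemma~\ref{l2.6} then factors the characteristic polynomial --- but you execute it differently. The paper never orders all the components at once: it deletes a \emph{single} arc $\overrightarrow{ij}$, observes that the two resulting vertex sets $V$ and $W$ have no other arc or undirected edge between them (so one off-diagonal block of $A_\alpha(T)$ is zero), applies Lemma~\ref{l2.6} with just two blocks, and recurses on each piece until every block is a component. Your version instead builds the full $t\times t$ block lower-triangular form in one shot via a topological order of the acyclic component digraph. Both work; the paper's recursion is slightly leaner because it needs no ordering argument at all --- acyclicity is used only through the fact that one arc separates the tree. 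One genuine (though local) flaw in your write-up is the parenthetical justification of the ordering: you propose to ``peel off a leaf of the component-tree that is not the head of any remaining arc,'' but such a leaf need not exist --- e.g.\ if a central component sends arcs $\overrightarrow{\phantom{x}}$ into two leaf components, both leaves are heads. You should peel a \emph{source} (or sink) of the component digraph, which always exists by acyclicity but need not be a leaf of the underlying tree; with that correction, or by simply citing the standard fact that every DAG admits a topological order, your argument is complete.
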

\begin{proof}
If $\overrightarrow{ij}\in E(T)$ is deleted to obtained two mixed trees with vertex sets $V$ and $W$, then besides $\overrightarrow{ij}$ there is no
arcs or undirected edges between a vertex in $V$ and a vertex in $W$. With $M=A_\alpha(T)$, $M_1=M[V]$, $M_2=M[W]$, we find $M$ satisfies the assumption of Lemma~\ref{l2.6}.
Hence ${\rm char}(M)={\rm char}(M_1)\times {\rm char}(M_2)$. We have the lemma by use this process on $M_1$ and $M_2$, and repeating again until each matrix is corresponding to a component of $T$.
\end{proof}

Note that $A_\alpha(T)[C_i]$ in Lemma~\ref{char} is not the $A_\alpha$ matrix of the component $C_i$ in $T$.

\begin{Corollary}\label{n-1}
If $\alpha\in[0,1]$ and $[T]\in\mathcal{T}(n,n-1)$, then
$${\rm char} (A_\alpha(T))=\prod_{i\in [n]}(\lambda-\alpha d_i^+).$$
\end{Corollary}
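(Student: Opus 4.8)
The plan is to specialize Lemma~\ref{char} to the case $m=n-1$, where a mixed tree $T$ has no undirected edges at all, so every edge of $T$ is an arc. Removing all arcs leaves $n$ isolated vertices, hence the components $C_1,\dots,C_t$ of $T$ are precisely the singletons $\{1\},\{2\},\dots,\{n\}$, i.e. $t=n$ and $|C_i|=1$ for each $i$. First I would invoke Lemma~\ref{char} to write ${\rm char}(A_\alpha(T))=\prod_{i\in[n]}{\rm char}(A_\alpha(T)[\{i\}])$. Each principal submatrix $A_\alpha(T)[\{i\}]$ is the $1\times 1$ matrix whose single entry is the $(i,i)$ entry of $A_\alpha(T)$.

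Next I would compute that diagonal entry. By definition $A_\alpha(T)=\alpha D^+ + (1-\alpha)A$, and since $A$ is the adjacency matrix of a mixed graph its diagonal is zero; hence the $(i,i)$ entry of $A_\alpha(T)$ equals $\alpha d_i^+$, where $d_i^+ = d^+_T(i)$ is the out-degree of vertex $i$. Therefore ${\rm char}(A_\alpha(T)[\{i\}]) = \det(\lambda - \alpha d_i^+) = \lambda - \alpha d_i^+$, and substituting into the product from Lemma~\ref{char} gives ${\rm char}(A_\alpha(T)) = \prod_{i\in[n]}(\lambda - \alpha d_i^+)$, as claimed.

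There is essentially no obstacle here; the only point requiring a word of care is the observation that $m=n-1$ forces $T$ to be arc-only (no undirected edges), which follows because an undirected edge counts twice toward the size $m$, so the presence of even one undirected edge together with a spanning tree structure on $n$ vertices would push $m$ to at least $n$. Once that is noted, the statement is an immediate corollary of Lemma~\ref{char} combined with the fact that the diagonal of $A_\alpha(T)$ records the out-degrees. I would present this in three or four lines.
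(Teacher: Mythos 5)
Your proposal is correct and follows exactly the paper's argument: specialize Lemma~\ref{char} to the case where every component is a singleton and observe that each $1\times 1$ principal submatrix $A_\alpha(T)[\{i\}]$ has entry $\alpha d_i^+$. The extra remark about why $m=n-1$ forces $T$ to have no undirected edges is a harmless elaboration of what the paper leaves implicit.
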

\begin{proof}
For $[T]\in\mathcal{T}(n,n-1)$, each vertex forms a component. Since $A_\alpha(T)[\{i\}]$ is an $1\times 1$ matrix with entries $\alpha d_i^+$, the result is straightforward from Lemma~\ref{char}.
\end{proof}

\begin{Proposition}\label{p3.3} Let $S$ be mixed star of order $n$, size $m$ and maximum out-degree $m-n+k+1$ for some $0\leq k\leq 2n-m-2$. Then for $\alpha\in [0, 1]$, the $A_\alpha$-spectral radius $\rho_\alpha(S)$ of $S$ is the maximal root of the following quadratic polynomial in $\lambda$:
\begin{equation}\label{e7} (\lambda-\alpha)(\lambda-\alpha(m-n+k+1))-(1-\alpha)^2(m-n+1).
\end{equation}
\end{Proposition}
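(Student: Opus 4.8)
The plan is to collapse $A_\alpha(S)$ to a small quotient matrix via an equitable partition and then read off its characteristic polynomial. First I would fix a center $c$ of $S$ and partition its $n-1$ leaves according to the edge joining them to $c$: let $\pi_2=\{\ell\colon\vv{c\ell}\in E(S)\}$, $\pi_3=\{\ell\colon c\ell\in E(S)\text{ an undirected edge}\}$, and $\pi_4=\{\ell\colon\vv{\ell c}\in E(S)\}$. Counting arcs (undirected edges twice) gives $|\pi_3|=m-n+1$ and $|\pi_2|+|\pi_4|=2n-m-2$, and $d^+_S(c)=|\pi_2|+|\pi_3|$ while every leaf has out-degree $1$ except those in $\pi_2$, which have out-degree $0$. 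Consequently the center realizes the maximum out-degree of $S$, except in the single boundary situation $m=n-1$ with all edges directed toward $c$, where $\rho_\alpha(S)=\alpha$ by Corollary~\ref{n-1} and (\ref{e7}) specializes to $(\lambda-\alpha)^2$, so the claim already holds. Thus we may assume $d^+_S(c)=m-n+k+1$, i.e. $|\pi_2|=k$ and $|\pi_4|=2n-m-2-k$.

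Next I would verify that $\Pi=\{\{c\},\pi_2,\pi_3,\pi_4\}$, after discarding any empty block, is an equitable partition of $A_\alpha(S)=\alpha D^++(1-\alpha)A$. This is an immediate computation of the block-row sums: from $c$ one gets $\alpha(m-n+k+1)$ into $\{c\}$, $(1-\alpha)k$ into $\pi_2$, $(1-\alpha)(m-n+1)$ into $\pi_3$, and $0$ into $\pi_4$; the row of any $\pi_2$-leaf is identically $0$; and the row of any $\pi_3$- or $\pi_4$-leaf carries $\alpha$ on the diagonal, sends $1-\alpha$ into $\{c\}$, and $0$ elsewhere. By Lemma~\ref{equ_quo} we then have $\rho_\alpha(S)=\rho(Q)$, where
$$Q=\begin{pmatrix}\alpha(m-n+k+1)&(1-\alpha)k&(1-\alpha)(m-n+1)&0\\0&0&0&0\\1-\alpha&0&\alpha&0\\1-\alpha&0&0&\alpha\end{pmatrix}.$$
Expanding $\det(\lambda I-Q)$ along its zero second row and then along the third column (each having a single nonzero entry) reduces it to a $2\times2$ determinant, giving
$$\det(\lambda I-Q)=\lambda(\lambda-\alpha)\big[(\lambda-\alpha)(\lambda-\alpha(m-n+k+1))-(1-\alpha)^2(m-n+1)\big].$$
So the eigenvalues of $Q$ are $0$, $\alpha$, and the two roots of the quadratic (\ref{e7}).

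It remains to identify $\rho(Q)$ with the larger root of (\ref{e7}). Writing $f(\lambda)$ for the polynomial in (\ref{e7}), $f$ is monic of degree $2$ and $f(\alpha)=-(1-\alpha)^2(m-n+1)\le0$ because $m\ge n-1$; hence $f$ has a real root $\lambda^*\ge\alpha\ge0$, which is then at least as large as each of the remaining eigenvalues $0$, $\alpha$, and the smaller root of $f$. Therefore $\rho_\alpha(S)=\rho(Q)=\lambda^*$ is the maximal root of (\ref{e7}). The one point that needs watching is precisely this last step — keeping the extraneous eigenvalues $0$ and $\alpha$ (and the ones produced when some $\pi_i$ is empty, where the analogous but shorter cofactor expansion again leaves $\lambda^*$ on top) from overtaking $\lambda^*$ — and the inequality $f(\alpha)\le0$ is exactly what makes this work; the rest is routine bookkeeping and a cofactor expansion.
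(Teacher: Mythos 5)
Your proof is correct and follows essentially the same route as the paper: partition the star into the center and the three leaf classes (out-arc, undirected, in-arc), pass to the equitable quotient of $A_\alpha$, factor its characteristic polynomial as $\lambda(\lambda-\alpha)$ times the quadratic (\ref{e7}), and observe that the quadratic's larger root is at least $\alpha$ because $f(\alpha)=-(1-\alpha)^2(m-n+1)\le 0$. Your treatment is in fact slightly more careful than the paper's on the boundary case where the center does not attain the maximum out-degree and on discarding empty blocks of the partition.
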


\begin{proof} Note that there are $m-n+1$ undirected edges in $S$. For convenience, assume that $1$ has the maximum degree $n-1$, $N^+_S(1)=[m-n+k+2]-\{1\}$ and $N^-_S(1)=([m-n+2]-\{1\})\cup \{m-n+k+3, m-n+k+4, \ldots, n\}$.

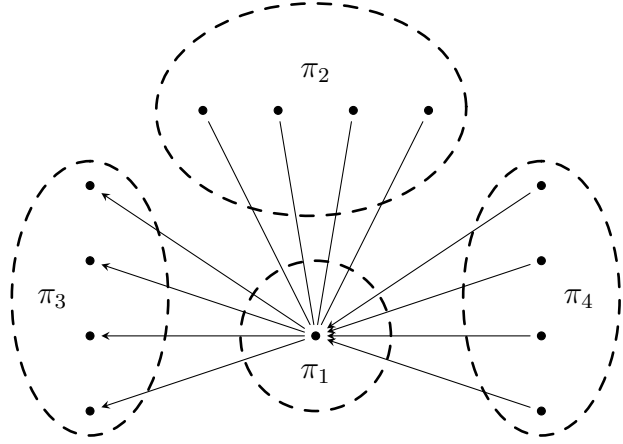
\begin{figure}
\centering
\begin{tikzpicture}[line cap=round,line join=round,>=triangle 45,x=1.0cm,y=1.0cm]
\draw  [stealth-](-0.85,9.9)-- (1.85,8.1);
\draw  [stealth-](-0.85,8.95)-- (1.85,8.05);
\draw  [stealth-](-0.85,8.)-- (1.85,8.);
\draw  [stealth-](-0.85,7.05)-- (1.85,7.95);
\draw  (0.575,10.85)-- (1.925,8.15);
\draw  (1.525,10.85)-- (1.975,8.15);
\draw  (3.425,10.85)-- (2.075,8.15);
\draw  (2.475,10.85)-- (2.025,8.15);
\draw  [stealth-](2.15,8.1)-- (4.85,9.9);
\draw  [stealth-](2.15,8.05)-- (4.85,8.95);
\draw  [stealth-](2.15,8.)-- (4.85,8.);
\draw  [stealth-](2.15,7.95)-- (4.85,7.05);
\draw [rotate around={90.:(-1.,8.5)},line width=1.pt,dash pattern=on 5pt off 5pt] (-1.,8.5) ellipse (1.825140769936451cm and 1.0397782600555754cm);
\draw [rotate around={1.145762838175079:(1.94,11.01)},line width=1.pt,dash pattern=on 5pt off 5pt] (1.94,11.01) ellipse (2.0607414666183628cm and 1.412712069828955cm);
\draw [rotate around={90.:(5.,8.5)},line width=1.pt,dash pattern=on 5pt off 5pt] (5.,8.5) ellipse (1.825140769936462cm and 1.0397782600555816cm);
\draw [line width=1.pt,dash pattern=on 5pt off 5pt] (2.,8.) circle (1.cm);
\begin{scriptsize}
\draw [fill=black] (-1.,10.) circle (1.5pt);
\draw [fill=black] (2.,8.) circle (1.5pt);
\draw [fill=black] (-1.,9.) circle (1.5pt);
\draw [fill=black] (-1.,8.) circle (1.5pt);
\draw [fill=black] (-1.,7.) circle (1.5pt);
\draw [fill=black] (0.5,11.) circle (1.5pt);
\draw [fill=black] (1.5,11.) circle (1.5pt);
\draw [fill=black] (3.5,11.) circle (1.5pt);
\draw [fill=black] (2.5,11.) circle (1.5pt);
\draw [fill=black] (5.,10.) circle (1.5pt);
\draw [fill=black] (5.,9.) circle (1.5pt);
\draw [fill=black] (5.,8.) circle (1.5pt);
\draw [fill=black] (5.,7.) circle (1.5pt);
\end{scriptsize}

\draw (2,7.5) node{$\pi_1$};
\draw (2,11.5) node{$\pi_2$};
\draw (5.5,8.5) node{$\pi_4$};
\draw (-1.5,8.5) node{$\pi_3$};
\end{tikzpicture}
\caption{The partition $\Pi$ of the vertices of a mixed star.}\label{fig2}
\end{figure}
 Set $\pi_1=\{1\},$ $\pi_2=\{2, 3, \ldots, m-n+2\},$ $\pi_3=\{m-n+3, m-n+4, \ldots, m-n+k+2\},$ and $\pi_4=[n]-\pi_1-\pi_2-\pi_3$ as illustrated in \ref{fig2}.
With respect to the partition $\Pi=\{\pi_1, \pi_2, \pi_3, \pi_4\}$ of $[m]$,  the adjacency matrix  $A$ and the diagonal out-degree matrix $D^+$ of $T$ have equitable quotient matrices $$\Pi(A)=\begin{pmatrix}
0 & m-n+1 & k & 0\\
1 & 0 & 0 & 0\\
0 & 0 & 0 & 0\\
1 & 0 & 0 & 0
\end{pmatrix}\mbox{ and }\Pi(D^+)=\begin{pmatrix}
m-n+k+1 & 0 & 0 & 0\\
0 & 1 & 0 & 0\\
0 & 0 & 0 & 0\\
0 & 0 & 0 & 1
\end{pmatrix},$$
respectively, which implies that the $A_\alpha$ matrix of $T$ has equitable quotient
$$\Pi(A_\alpha)=\begin{pmatrix}
\alpha(m-n+k+1) & (1-\alpha)(m-n+1) & (1-\alpha)k & 0\\
1-\alpha & \alpha & 0 & 0\\
0 & 0 & 0 & 0\\
1-\alpha & 0 & 0 & \alpha
\end{pmatrix}.$$
Since the characteristic polynomial of $\Pi(A_\alpha)$ is
$$\lambda(\lambda-\alpha)((\lambda-\alpha)(\lambda-\alpha(m-n+k+1))-(1-\alpha)^2(m-n+1)),$$
and the zero in (\ref{e7}) is at least $\alpha,$ we complete the proof.
\end{proof}
\bigskip

\noindent {\bf Proof of Theorem~\ref{upr}.} By Proposition~\ref{p2.5}, it suffices to show that for each maximal element $[T]\in \mathcal{T}(n, m)$ characterized in Proposition~\ref{p3.2}, $\rho_\alpha(T)$ is at most the upper bound appearing in Theorem~\ref{upr}. Suppose  $T=S$ is a mixed star with maximal out-degree $m-n+k+1$. Since the largest root of the quadratic polynomial in (\ref{e7}) increases as lone as $k$ increases, we might assume $k=2n-m-2$, and find (\ref{e7}) becomes
$$\lambda^2-\alpha n \lambda+\alpha^2(n-1)-(1-\alpha)^2(m-n+1),$$ which has largest root as the upper bound appearing in Theorem~\ref{upr}.
For the remaining mixed trees $T\in\mathcal{T}(n,n-1)$, from Corollary~\ref{n-1} we know that the $A_\alpha$ matrix of $T$ has characteristic polynomial $\prod_{i\in[n]}(\lambda-\alpha d_i^+)$, so $\rho_\alpha(T)=\alpha\cdot(\max_{i\in[n]} d_i^+)\leq\alpha (n-1)$, where the equality holds when $T$ is the star with $n-1$ leaves being out-neighbor of a vertex. Moreover, $\alpha (n-1)$ is equal to the upper bound appearing in Theorem~\ref{upr} when $m=n-1$. \qed

\section{The lower bound of $\rho_\alpha(T)$}\label{s4}

The following theorem was proved in \cite{nprs:17}.

\begin{Theorem}\label{tnprs} (\cite{nprs:17}) If $T$ is a tree of order $n$ and $\alpha\in [0, 1]$, then
$$\rho_\alpha(T)\geq \rho_\alpha(P_n).$$
Equality holds if and only if $G=P_n$. \qed
\end{Theorem}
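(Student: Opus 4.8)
The statement concerns an \emph{undirected} tree $T$, so $A_\alpha(T)=\alpha D+(1-\alpha)A$ is symmetric and $\rho_\alpha(T)$ is its largest eigenvalue, with the variational characterization $\rho_\alpha(T)=\max_{y\neq 0}(y^{T}A_\alpha(T)y)/(y^{T}y)$, where the quadratic form splits edgewise as $y^{T}A_\alpha(T)y=\sum_{ij\in E(T)}(\alpha(y_i^2+y_j^2)+2(1-\alpha)y_iy_j)$. The plan is to isolate a single local \emph{grafting} move that strictly lowers $\rho_\alpha$ and drives any tree toward the path, and then run a short extremal argument. First I would dispose of the easy cases: if $n\le 2$ the only tree is $P_n$; if $\alpha=1$ then $A_\alpha(T)=D$ and $\rho_1(T)=\Delta(T)$, the maximum degree, so since every tree of order $n\ge 3$ has $\Delta\ge 2=\Delta(P_n)$ with equality exactly for the path, the claim holds. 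Thus I may assume $n\ge 3$ and $\alpha\in[0,1)$.

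In this range $(1-\alpha)A$ is irreducible, so $A_\alpha(T)$ is an irreducible nonnegative symmetric matrix, and Perron--Frobenius supplies a strictly positive eigenvector $z$ for $\rho:=\rho_\alpha(T)$. A test vector supported on a $3$-vertex subpath (e.g. the all-ones vector there) shows $\rho>1$, a fact I will use repeatedly. The key tool is a grafting inequality: let $H$ be any graph with a distinguished vertex $v$, and form $G_{p,q}$ by attaching at $v$ two pendant paths of lengths $p\ge q\ge 1$; then $\rho_\alpha(G_{p,q})>\rho_\alpha(G_{p+1,q-1})$. Thus the more \emph{unbalanced} configuration has the smaller spectral radius, and repeatedly unbalancing drives a tree toward a single path.

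To prove the grafting inequality I would take the positive Perron vector $z$ of $G_{p+1,q-1}$ with eigenvalue $\rho$. The eigen-equation at a pendant leaf reads $(1-\alpha)z_{\mathrm{prev}}=(\rho-\alpha)z_{\mathrm{leaf}}$, so $z_{\mathrm{prev}}=((\rho-\alpha)/(1-\alpha))z_{\mathrm{leaf}}$, and since $\rho>1$ the factor exceeds $1$; feeding this into the degree-two recurrence along the path shows by induction that the entries of $z$ strictly increase as one moves inward along any pendant path. Now $G_{p,q}$ and $G_{p+1,q-1}$ share a vertex set and differ only in the attachment of one leaf $w$: in $G_{p+1,q-1}$ the neighbor of $w$ is the far end of the long path, whereas in $G_{p,q}$ it is a vertex one step closer to $v$, whose $z$-entry is strictly larger, say $z_\bullet$. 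Comparing $z^{T}A_\alpha(G_{p,q})z$ with $\rho\,z^{T}z=z^{T}A_\alpha(G_{p+1,q-1})z$ term by term, only the edge at $w$ changes, contributing $\alpha(z_w^2+z_\bullet^2)+2(1-\alpha)z_wz_\bullet$ with the larger neighbor entry, so the Rayleigh quotient of $z$ on $G_{p,q}$ strictly exceeds $\rho$; hence $\rho_\alpha(G_{p,q})>\rho$.

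Finally I would run the extremal argument. Every tree that is not a path has a vertex incident with at least two pendant paths: letting $B$ be the set of vertices of degree $\ge 3$, either $|B|=1$ and that single branch vertex has $\ge 3$ pendant paths, or one takes a leaf $v$ of the minimal subtree spanning $B$, which forces all but one branch at $v$ to be branch-free, i.e. pendant paths. Applying the grafting inequality repeatedly at such a $v$, each step transferring one vertex from the shorter pendant path to the longer one (including the boundary step $q=1\to q=0$, where the leaf is moved directly off $v$), strictly decreases $\rho_\alpha$ and eventually empties the shorter path, lowering $\deg v$; iterating over all branch vertices transforms $T$ into $P_n$ with a strict decrease whenever a move is made. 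Hence any $T\neq P_n$ satisfies $\rho_\alpha(T)>\rho_\alpha(P_n)$, giving both the inequality and the equality case. The main obstacle is the grafting inequality itself: making the vertex correspondence between $G_{p,q}$ and $G_{p+1,q-1}$ precise and verifying that the single differing edge strictly raises the quadratic form, which is exactly where the monotone inward growth of the Perron vector (and thus the hypothesis $\rho>1$) is essential.
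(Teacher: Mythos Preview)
The paper does not prove this theorem at all: it is quoted verbatim from \cite{nprs:17} and marked \qed. So there is nothing in the present paper to compare your argument against; your outline is, in fact, essentially the strategy used in \cite{nprs:17} (a Li--Feng type ``grafting'' lemma followed by an extremal sweep toward $P_n$).

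That said, your proof of the grafting inequality has a genuine gap. Passing from $G_{p+1,q-1}$ to $G_{p,q}$ you delete the edge $\{u_p,u_{p+1}\}$ at the tip of the long path and insert $\{w_{q-1},u_{p+1}\}$ at the tip of the \emph{short} path (where $w_0=v$ if $q=1$). The new neighbour $w_{q-1}$ is therefore not ``one step closer to $v$'' along the same path; it lies on the other pendant path. Consequently the inequality you need,
\[
z_{w_{q-1}}>z_{u_p}\qquad\text{(for the Perron vector $z$ of }G_{p+1,q-1}\text{)},
\]
is \emph{not} implied by the inward monotonicity of $z$ along each pendant path separately: that monotonicity compares values on the same path, whereas here you must compare values across two different paths. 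In fact the comparison can fail. For $\alpha=0$, take $H=\{v\}$, $p=2$, $q=1$: then $G_{3,0}=P_4$, the Perron vector has $z_v=\sin(4\pi/5)<\sin(2\pi/5)=z_{u_2}=z_{u_p}$, so your Rayleigh-quotient step gives $\le$, not $>$ (and indeed $G_{2,1}\cong G_{3,0}$). Even with $H$ a single edge at $v$ (so $G_{2,0}=P_4$, $G_{1,1}=K_{1,3}$) one gets $z_v=z_{u_1}$, hence $z^{T}A_\alpha(G_{1,1})z=z^{T}A_\alpha(G_{2,0})z$, and your argument yields only $\ge$; the strict inequality $\rho_\alpha(K_{1,3})>\rho_\alpha(P_4)$ is true, but it requires the extra observation that $z$ is \emph{not} an eigenvector of the new graph.

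To repair the argument you need two things: (i) add the hypothesis that $v$ has at least one neighbour in $H$ (so $\deg_{G_{p+1,q-1}}(v)\ge 3$), which is exactly how you use the lemma anyway, and (ii) prove the cross-path comparison $z_{w_{q-1}}\ge z_{u_p}$ directly (e.g.\ by writing the path entries via the common linear recurrence with initial value $z_v$ and analysing how the leaf boundary condition on the shorter path forces larger values at each distance), together with the ``$z$ is not an eigenvector of $G_{p,q}$'' observation to recover strictness. The rest of your plan (branch-vertex existence, iterated grafting down to $P_n$, the $\alpha=1$ and $n\le 2$ cases) is fine.
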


\noindent {\bf Proof of Theorem~\ref{lwr}.} Let $T$ be a mixed tree of order $n$ and size $m$. Then $T$ has $2n-m-1$ components, and there exists a components
of order at least $k=\lceil \frac{n}{2n-m-1}\rceil$. If $m=2n-2$ then $k=n$ and $\rho_\alpha(T)\geq \rho_\alpha(P_k)$ by Theorem~\ref{tnprs} where the equality holds when $T=P_n$. For $m<2n-2$, let $C_1$ be a component of $T$ with maximum size $t$. Then $t\geq k\geq 2$ and $A_\alpha(T)[C_1]\geq A_\alpha(C_1)$.
Hence by Lemma~\ref{perron}, Lemma~\ref{char} and Theorem~\ref{tnprs},
$$\rho_\alpha(T)\geq \rho(A_\alpha(T)[C_1])\geq\rho(A_\alpha(C_1))= \rho_\alpha(P_t)\geq \rho_\alpha(P_k).$$

\qed
\bigskip

{\bf Acknowledgements.} This research is supported by the Ministry of Science and Technology
of Taiwan under the project MOST 109-2115-M-009 -007 -MY2.

\end{document}